\documentclass[12pt]{amsart}
\usepackage{amssymb}
\usepackage{mathrsfs}
\setlength{\textwidth}{15.0truecm}
\setlength{\textheight}{22.5truecm}
\setlength{\topmargin}{0mm}
\setlength{\oddsidemargin}{0.3cm}
\setlength{\evensidemargin}{0.3cm}


\renewcommand\b{\beta}

\renewcommand\d{\delta}
\newcommand\la{\lambda}

\newcommand\e{\eta}

\newcommand\io{\iota}

\newcommand\f{\phi}
\newcommand\vf{\varphi}
\newcommand\p{\psi}

\newcommand\w{\omega}

\newcommand\vS{\varSigma}
\newcommand\D{\Delta}

\newcommand\F{\Phi}

\newcommand\vL{\varLambda}

\newcommand\ve{\varepsilon}

\newcommand{\ZZ}{\mathbb Z}

\newcommand\BP{\mathbf P}

\newcommand\Ba{\mathbf a}

\newcommand\Bp{\mathbf p}
\newcommand\Bm{\mathbf m}
\newcommand\Bb{\mathbf b}

\newcommand\ZC{\mathcal{C}}

\newcommand\CP{\mathcal{P}}

\newcommand\CT{ \mathcal{T}}

\newcommand\wh{\widehat}
\newcommand\wt{\widetilde}

\newcommand\ol{\overline}

\newcommand\trreq{\trianglerighteq}
\newcommand\hra{\hookrightarrow}

\newcommand\LRa{\Leftrightarrow }

\newcommand{\lan}{\langle}
\newcommand{\ran}{\rangle}

\newcommand{\ap}{\Ba_\Bp}
\newcommand{\trr}{\triangleright }

\newcommand{\ra}{\rightarrow }

\newcommand\Ker{\operatorname{Ker}}
\newcommand\Hom{\operatorname{Hom}}

\newcommand\id{\operatorname{id}}

\newcommand\Std{\operatorname{Std}}
\newcommand\GL{\operatorname{GL}}
\newcommand{\diag}{\operatorname{diag}}
\newcommand{\rad}{\operatorname{rad}}


\newcommand{\isom}{\,\raise2pt\hbox{$\underrightarrow{\sim}$}\,}
\newcommand{\Sc}{\mathscr{S}}
\newcommand{\Sp}{\mathscr{S}^\Bp}
\newcommand{\oSp}{\ol{\Sc}^\Bp}
\newcommand{\alp}{\alpha_\mathbf{p}}
\newcommand{\He}{\mathscr{H}}
\newcommand{\fs}{\mathfrak{s}}
\newcommand{\ft}{\mathfrak{t}}
\newcommand{\fsl}{\mathfrak{sl}}
\newcommand{\fu}{\mathfrak{u}}
\newcommand{\fv}{\mathfrak{v}}

\newtheorem{thm}{Theorem}[section]
\newtheorem{lem}[thm]{Lemma}
\newtheorem{cor}[thm]{Corollary}
\newtheorem{prop}[thm]{Proposition}

\def \para{\refstepcounter{thm} \par\medskip\noindent
                \textbf{\thethm .} }

\def \remark{\refstepcounter{thm} \par\medskip\noindent
                \textbf{Remark \thethm .} }

\newcounter{ichi}
\setcounter{ichi}{1}
\newcommand{\roi}{\roman{ichi}}
\newcounter{ni}
\setcounter{ni}{2}
\newcommand{\roii}{\roman{ni}}
\newcounter{san}
\setcounter{san}{3}
\newcommand{\roiii}{\roman{san}}

\numberwithin{equation}{thm}

\begin{document}
\setlength{\baselineskip}{4.9mm}
\setlength{\abovedisplayskip}{4.5mm}
\setlength{\belowdisplayskip}{4.5mm}
\renewcommand{\theenumi}{\roman{enumi}}
\renewcommand{\labelenumi}{(\theenumi)}
\renewcommand{\thefootnote}{\fnsymbol{footnote}}
\renewcommand{\thefootnote}{\fnsymbol{footnote}}
\parindent=20pt
\medskip
\begin{center}
{\bf On decomposition numbers with Jantzen filtration \\of cyclotomic $q$-Schur algebras} 
\\
\vspace{1cm}
Kentaro Wada 
\\ 
\vspace{0.5cm}
Graduate School of Mathematics \\
Nagoya University  \\
Chikusa-ku, Nagoya 464-8602,  Japan
\end{center}
\title{}
\maketitle
\markboth{Kentaro Wada}{On decomposition numbers with Jantzen filtration}
\begin{abstract}
Let $\Sc(\vL)$ be the cyclotomic $q$-Schur algebra associated to the Ariki-Koike algebra $\He_{n,r}$, 
introduced by Dipper-James-Mathas. In this paper, we consider  $v$-decomposition numbers of $\Sc(\vL)$, 
namely decomposition numbers with respect to the Jantzen filtrations of Weyl modules. 
We prove, as a $v$-analogue of the result obtained by Shoji-Wada, a product formula for $v$-decomposition numbers of $\Sc(\vL)$, which asserts that 
certain $v$-decomposition numbers are expressed as a product of $v$-decomposition numbers for various 
cyclotomic $q$-Schur algebras associated to Ariki-koike algebras $\He_{n_i,r_i}$ of smaller rank. 
Moreover we prove a similar formula for $v$-decomposition  numbers of $\He_{n,r}$ by using a Schur functor.
\end{abstract}
\setcounter{section}{-1}
\section{Introduction}
Let $\He=\He_{n,r}$ be the Ariki-Koike algebra over an integral domain $R$ 
associated to the complex reflection group $\mathfrak{S}_n \ltimes (\ZZ/r\ZZ)^n$. 
Dipper, James and Mathas \cite{DJM98} introduced the cyclotomic $q$-Schur algebra $\Sc(\vL)$ associated to the Ariki-Koike algebra $\He$, 
and they showed that $\He$ and $\Sc(\vL)$ are cellular algebras in the sence of Graham and Lehrer \cite{GL96},  
by constructing the cellular basis respectively. 
It is a fundamental problem for the representation theory to determine the decomposition numbers of $\He$ and $\Sc(\vL)$. 
It is well-known that the decomposition matrix of $\He$ coincides with 
the submatrix of that of $\Sc(\vL)$ by the Schur functor.

In the case where $\He$  is the Iwahori-Hecke algebra $\He_n$ of type A, 
 Lascoux, Leclerc and  Thibon \cite{LLT96} conjectured that the decomposition numbers of $\He_n$ can be described 
by using the canonical basis of a certain irreducible $U_v(\wh{\mathfrak{sl}}_e)$-module,  
and gave the algorithm to compute this canonical basis. 
The cojecture has been solved by Ariki \cite{Ari96}, by extending to the case of Ariki-Koike algebras.

In the case of the $q$-Schur algebra associated to $\He_n$, 
 Leclerc and  Thibon \cite{LT96} conjectured that the decomposition matrix coincides with the transition matrix  
between the canonical basis and the standard basis of the Fock space of level 1 equipped with the $U_v(\wh{\fsl}_e)$-module structure,  
and gave the algorithm to compute the transition matrix. 
This conjecture has been solved by Varagnolo and  Vasserot in \cite{VV99}. 

More generally, in the case of the cyclotomic $q$-Schur algebra $\Sc$, Yvonne \cite{Yvo05} has conjectured that the decomposition matrix 
coincides with the transition matrix  between the canonical basis and the standard basis 
of the higher-level Fock space. 
This canonical basis was constructed by Uglov \cite{Ugl00} and the algorithm to compute the transition matrix was also given there. 
Yvonne's conjecture is still open. 
We remark that Ariki's theorem, Varagnolo-Vasserot's theorem and Yvonne's conjecture are concerned with the situation where $R$ is a complex number field 
and parameters are roots of unity. 

In order to study the decomposition numbers of $\Sc$, we constructed in \cite{SW} some subalgebras $\Sp$ of 
$\Sc(\vL)$ and their quotients $\oSp$, and showed that $\Sp$ 
is a standardly based algebra in the sence of Du and Rui \cite{DR98}, and that $\oSp$ is a cellular algebra. 
Hence, one can consider the decomposition numbers of $\Sp$ and $\oSp$ also. 
We denote the decomposition numbers of $\Sc$, $\Sp$ and $\oSp$ 
by $d_{\la\mu}$, $d_{\la\mu}^{(\la,0)}$ and $\ol{d}_{\la\mu}$ respectively, 
where $d_{\la\mu}$ is a decomposition number of the irreducible module $L^\mu$ in the Weyl module $W^\la$ of $\Sc$ for $r$-partitions $\la,\mu$, 
and $d_{\la \mu}^{(\la,0)}$, $\ol{d}_{\la\mu}$ are defined similarly for $\Sp$ and $\oSp$ (see Section 1 for details).  
It is proved in \cite[Theorem 3.13]{SW} that \medskip \\ 
(1) \quad$\ol{d}_{\la\mu}=d_{\la\mu}^{(\la,0)}= d_{\la\mu}$ \medskip \\
whenever $\la, \mu$ satisfy a certain condition $\alp(\la)=\alp(\mu)$.  
Moreover for such $\la,\mu$, 
the product formula for  $\ol{d}_{\la\mu}$,  \medskip\\
(2) \quad$\ol{d}_{\la\mu}=\prod_{k=1}^{g} d_{\la^{[k]}\mu^{[k]}}$, \medskip\\ 
was proved in \cite[Theorem 4.17]{SW}, 
where $d_{\la^{[k]}\mu^{[k]}}$ for $k=1,\cdots ,g$ is the decomposition number of the cyclotomic $q$-Schur algebra 
associated to a certain Ariki-Koike algebra $\He_{n_k,r_k}$. 

Related to the above conjectures on Fock spaces, Leclerc-Thibon and Yvonne give a more precise conjecture concerning the $v$-decomposition numbers 
defined by using  Jantzen filtrations of Weyl modules. (For definition of $v$-decomposition numbers, see \S\ref{def-vdecom}.) 
We remark that decomposition numbers coincide with $v$-decomposition numbers at $v=1$. 
Thus we regard $v$-decomposition numbers as a $v$-analogue of decomposition numbers. 
The conjecture for $v$-decomposition numbers has been still open even in the case of the $q$-Schur algebra of type A. 

In this paper, we show that similar formula as (1) and (2) also hold for $v$-decomposition numbers. 
We denote the $v$-decomposition numbers of $\Sc(\vL)$, $\Sp(\vL)$ and $\oSp(\vL)$ 
by $d_{\la\mu}(v)$, $d_{\la\mu}^{(\la,0)}(v)$ and $\ol{d}_{\la\mu}(v)$ respectively. 
Then for $r$-partitions $\la,\mu$ such that $\alp(\la)=\alp(\mu)$, we have (Theorem \ref{th-vdecom})
\[\ol{d}_{\la\mu}(v)=d_{\la\mu}^{(\la,0)}(v)=d_{\la\mu}(v),\] 
and (Theorem \ref{th-decom-bar-v})
\[d_{\la\mu}(v)=\ol{d}_{\la\mu}(v)=\prod_{k=1}^g d_{\la^{[k]}\mu^{[k]}}(v),\]
where $d_{\la^{[k]}\mu^{[k]}}(v)$ is the $v$-decomposition number of the cyclotomic $q$-Schur algebra
appeared in (2).

We note that our result is a $v$-analogue of (1),(2), and it reduces to them by taking $v \mapsto 1$. 
Moreover, for a certain $v$-decomposition number $d_{\la\mu}^\He(v)$ of the Ariki-Koike algebra, we also have the following product formula 
(Theorem \ref{th-vdecom-AK}). 
\[d_{\la\mu}^\He(v)=\prod_{k=1}^g d_{\la^{[k]}\mu^{[k]}}^\He(v),\]
where $d_{\la^{[k]}\mu^{[k]}}^\He(v)$ is the $v$-decomposition number of the certain Ariki-Koike algebra $\He_{n_k,r_k}$.

We remark that our results hold for any parameters and any modular system, even for the case where the base field has non-zero characteristic, 
though Yvonne's conjecture is formulated under certain restrictions for parameters and modular systems.

\vspace{1em}
\noindent\textbf{Acknowledgments} I would like to thank Toshiaki Shoji and Hyohe Miyachi for many helpful advices and discussions.
\section{A review of known results}
\label{pre}
\para 
Througout the paper, we follow the notation in \cite{SW}. Here we review some of them. 
We fix positive integers $r$, $n$ and an $r$-tuple $\Bm=(m_1,\cdots ,m_r)\in \ZZ_{>0}^r$. 
A composition $\la=(\la_1,\la_2,\cdots)$ is a finite sequence of non-negative integers, 
and $|\la|=\sum_{i} \la_i$ is called the size of $\la$. If $\la_l\not=0$ and $\la_k=0$ for any $k>l$, then $l$ is called the length of $\la$. 
If the composition $\la$ is a weakly decreasing sequence, $\la$ is called a partition. 
An $r$-tuple $\mu=(\mu^{(1)},\cdots ,\mu^{(r)})$ of compositions is called the $r$-composition, 
and size $|\mu|$ of $\mu$ is  defined by $\sum_{i=1}^r|\mu^{(i)}|$. 
In particular, if all $\mu^{(i)}$ are partitions, $\mu$ is called an $r$-partition. 
We denote by $\vL=\wt{\CP}_{n,r}(\Bm)$ the set of $r$-compositions $\mu=(\mu^{(1)},\cdots ,\mu^{(r)})$ such that $|\mu|=n$ and 
that the length of $\mu^{(k)}$ is smaller than $m_k$ for $k=1,\cdots ,r$. We define $\vL^+=\CP_{n,r}(\Bm)$ as the subset of $\vL$ consisting of $r$-partitions .

We define the partial order, the so-called \lq\lq dominance order", on $\vL$ by $\mu \trreq \nu$ if and only if
\[\sum_{i=1}^{l}|\mu^{(i)}| +\sum_{j=1}^{k}\mu^{(l)}_j \geq \sum_{i=1}^{l}|\nu^{(i)}| +\sum_{j=1}^{k}\nu^{(l)}_j\]
for any $1\leq l \leq r$, $1\leq k \leq m_l$. If $\mu \trreq \nu $ and $\mu \not= \nu$, we write it as $\mu \trr \nu$. 

For $\la \in \vL^+$, we denote by $\Std(\la)$ the set of standard tableau of shape $\la$. 
For $\la \in \vL^+$ and $\mu \in \vL$, we denote by $\CT_0(\la,\mu)$ the set of semistandard $\la$-tableau of type $\mu$. 
Moreover we set $\CT_0(\la)=\cup_{\mu\in \vL}\CT_0(\la,\mu)$. 
For definitions of standard tableau and semistandard tableau, see \cite{SW} or \cite{DJM98}. 
\para 
Let $\He=\He_{n,r}$ be the Ariki-Koike algebra over an integral domain $R$ with parameters $q, Q_1,\cdots,Q_r$ with defining relations in \cite[\S 1.1]{SW}. 
It is known by \cite{DJM98} that $\He$ has a structure of the cellular algebra with a cellular basis 
$\{m_{\fs \ft}\,|\, \fs, \ft \in \Std(\la) \text{ for some }\la \in \vL^+ \}$. 
Then the general theory of a cellular algebra by \cite{GL96} implies the following results. 
There exists an anti-automorphism $h \mapsto h^\ast$ of $\He$ such that $m_{\fs \ft}^\ast=m_{\ft \fs}$. 
For $\la \in \vL^+$, let $\He^{\vee \la}$ be the $R$-submodule of $\He$ 
spaned by $m_{\fs \ft}$, where $\fs,\ft\in \Std (\mu)$ for some $\mu\in \vL^+$ such that $\mu \triangleright \la$. 
Then $\He^{\vee \la}$ is an ideal of $\He$.
 One can construct the standard (right) $\He$-module $S^\la$, called a Specht module, with 
the $R$-free basis $\{m_{\ft}\,|\, \ft\in \Std(\la)\}$. 
We define the bilinear form $\lan\,,\,\ran_{\He}$ on $S^\la$ by 
\[\lan m_{\fs}, m_{\ft} \ran_\He m_{\fu \fv}\equiv m_{\fu \fs} m_{\ft \fv} \mod \He^{\vee \la} \quad \big( \fs,\ft \in \Std({\la})\big),\]
where $\fu, \fv \in \Std(\la)$, and the scalar $\lan m_{\fs},m_{\ft}\ran_\He$ does not depend on the choice of $\fu ,\fv \in \Std(\la)$. 
The bilinear form $\lan\,,\,\ran_\He$ is associative, namely we have 
\begin{equation}
\lan xh, y \ran_\He =\lan x, yh^\ast \ran_\He \quad \text{for }x,y \in S^\la,\,\,h\in \He.
\end{equation}
Let $\rad S^\la =\{x\in S^\la\,|\, \lan x,y \ran_\He=0 \text{ for any } y\in S^\la \}$. 
Then $\rad S^\la$ is the $\He$-submodule of $S^\la$ 
by the associativity of the bilinear form. Put $D^\la = S^\la / \rad S^\la$. 
Assume that $R$ is a field. 
Then $D^\la$ is an absolutely irreducible module or zero, 
and the set $\{D^\la \,|\, \la\in \vL^+ \text{ such that } D^\la \not=0 \}$ gives a complete set of non-isomorphic irreducible $\He$-modules.
\para
Let $\Sc=\Sc(\vL)$ be the cyclotomic $q$-Schur algebra introduced by \cite{DJM98}, associated to the Ariki-Koike algebra 
$\He$ with respect to the set $\vL$. 
It is known by \cite{DJM98} that $\Sc$ is a cellular algebra with a cellular basis $\{\vf_{ST}\,|\,S,T\in\CT_0(\la) \text{ for some }\la\in \vL^+\}$. 
Again by the general theory of a cellular algebra, the following results hold. 
There exists the anti-automorphism $x \mapsto x^\ast$ of $\Sc$ such that $\vf_{ST}^\ast=\vf_{TS}$. 
For $\la \in \vL^+$, let $\Sc^{\vee \la}$ be the $R$-submodule spaned by 
$\vf_{ST}$, where $S,T\in \CT_0(\mu)$ for some $\mu \in \vL^+$ such that $\mu \trr \la$. 
Then $\Sc^{\vee \la}$ is an ideal of $\Sc$. 
One can construct the standard (right) $\Sc$-module $W^\la$ ($\la \in \vL^+$),  
called a Weyl module, with the $R$-free basis $\{\vf_{T}\,|\,T\in \CT_0(\la)\}$. We define a bilinear form $\lan\,,\,\ran$ on $W^\la$ by
\[\lan \vf_S,\vf_T \ran \vf_{UV}\equiv \vf_{US}\vf_{TV} \mod \Sc^{\vee \la} \quad \big(S,T\in \CT_0(\la)\big),\]
where $U,V\in \CT_0(\la)$, and the scalar $\lan \vf_S , \vf_T \ran$ does not depend on a choice of $U,V\in \CT_0(\la)$. 
The bilinear form $\lan \,,\,\ran$ is associative, namely we have 
\begin{equation} \label{asso-bi-W}
\lan x\vf ,y \ran =\lan x, y\vf^\ast \ran \quad \text{for }x,y \in W^\la,\,\,\vf\in \Sc. 
\end{equation}
Let $\rad W^\la=\{x \in W^\la\,|\, \lan x,y\ran=0 \text{ for any }y\in W^\la\}$, 
Then $\rad W^\la$ is the $\Sc$-submodule of $W^\la$. 
Put $L^\la=W^\la/\rad W^\la$.
Then it is known by \cite{DJM98} that $L^\la\not=0$ for any $\la\in \vL^+$. 
Assume that $R$ is a field. Then $L^\la$ is an absolutely irreducible module, 
and the set $\{L^\la\,|\,\la \in \vL^+\}$ gives a complete set of non-isomorphic irreducible $\Sc$-modules. 
\para 
We recall some definitions and results in \cite{SW}. 
We fix a positive integer $g\leq r$ and $\Bp=(r_1,\cdots ,r_g) \in \ZZ_{>0}^g$ such that $r_1+\cdots +r_g=r$, 
and set $p_1=0,\,p_i=\sum_{j=1}^{i-1}r_j$ for $i=2,\cdots g$.
 For $\mu=(\mu^{(1)},\cdots ,\mu^{(r)})\in \vL$, 
we define $\alp(\mu)=(n_1,\cdots,n_g)$ and $\ap(\mu)=(a_1,\cdots,a_g)$, 
where $n_k=\sum_{i=1}^{r_k}|\mu^{(p_k+i)}|$ and $a_k=\sum_{i=1}^{k-1}n_i$ for $k=1,\cdots ,g$ with $a_1=0$. 
We define a partial order on $\ZZ_{>0}^g$ by $\Ba=(a_1,\cdots ,a_g) \geq \Bb=(b_1,\cdots ,b_g)$ if $a_i \geq b_i$ for any $i=1,\cdots , g$ 
and we write $\Ba > \Bb$ if $\Ba \geq \Bb$ and $\Ba \not= \Bb$. 
Later we consider the partial order on $\{\ap(\mu)\,|\,\mu \in \vL \}$ by this order.

For $\la \in \vL^+$ and $\mu \in \vL$, we set $\CT_0^\Bp(\la,\mu)=\CT_0(\la,\mu)$ if $\alp(\la)=\alp(\mu)$, and is empty otherwise. 
Moreover we set $\CT_0^\Bp(\la)=\bigcup_{\mu \in \vL}\CT_0^\Bp(\la,\mu)$.
We set 
\begin{align*}
\vS^\Bp=\big(\vL^+\times \{0,1\}\big)\setminus \Big\{(\la,1)&\in \vL^+ \times \{0,1\} \bigm| \CT_0(\la,\mu)=\f \\
	&\text{ for any } \mu\in \vL \text{ such that } \ap(\la)>\ap(\mu) \Big\},
\end{align*}
and define a partial order $\geq$ on $\vS^\Bp$ by $(\la_1,\ve_1)>(\la_2,\ve_2)$ 
	if $\la_1 \trr \la_2$ or if $\la_1=\la_2$ and $\ve_1>\ve_2$. 
For $\e=(\la,\ve)\in \vS^\Bp$, we set 
\begin{align*}
&I(\e)=\begin{cases} \CT_0^\Bp(\la) &\text{if }\ve=0, \\ \\
			\displaystyle \bigcup_{\mu \in \vL \atop \ap(\la)>\ap(\mu)} \CT_0(\la,\mu) &\text{if } \ve=1,
			\end{cases} \\ \\
&J(\e)=\begin{cases} \CT_0^\Bp(\la) &\text{if }\ve =0,\\
					\CT_0(\la)&\text{if }\ve=1, 
			\end{cases}
\end{align*}
\[\ZC^\Bp(\e)=\big\{\vf_{ST}\bigm| (S,T)\in I(\e) \times J(\e) \big\} \qquad \text{for }\e\in \vS^\Bp,\]
and 
\[\ZC^\Bp=\bigcup_{\e\in \vS^\Bp}\ZC^\Bp(\e).\]

Let $\Sp=\Sp(\vL)$ be the $R$-submodule of $\Sc(\vL)$ spanned by $\ZC^\Bp$. 
We also define $(\Sp)^{\vee \e}$ as the $R$-submodule of $\Sp$ spanned by 
\[ \big\{ \vf_{UV} \bigm| (U,V)\in I(\e^\prime) \times J(\e^\prime) 
	\text{ for some } \e^\prime \in \vS^\Bp \text{ such that }\e^\prime > \e \big\}.\]

It is known by \cite[Theorem 2.6]{SW} that $\Sp$ is a standardly based algebra 
with the standard basis $\ZC^\Bp$ in the sence of \cite{DR98}.

By the general theory of standardly based algebra due to \cite{DR98}, we have the following results. 
 For $\e\in \vS^\Bp$, one can consider the standard left $\Sp$-modules $ ^\diamondsuit Z^{\e}$
 with the basis $\big\{\vf^{\e}_T \bigm| T\in I(\e) \big\}$ 
and the standard right $\Sp$-module $Z^{\e}$  with the basis $\big\{\vf_T^{\e} \bigm| T\in J(\e)\big\}$. 
We call them Weyl modules of $\Sp$. 
We  define the bilinear form $\b_{\e}: \,^\diamondsuit Z^{\e} \times Z^{\e} \ra R$ by
\[\b_{\e}\big(\vf_{S}^{\e},\vf_{T}^{\e}\big)\vf_{UV} \equiv \vf_{UT}\vf_{SV} \mod (\Sp)^{\vee \e} 
	\qquad \big(S\in I(\e), T\in J(\e) \big),\]
where $\b_{\e}$ is determined independent of the choice of $U\in I(\e)$ and $V \in J(\e)$. 
The bilinear form $\b_{\e}$ is associative, namely we have 
\begin{equation} \label{asso-bi-Z}
\b_{\e}(\vf x,y)=\b_{\e}(x,y\vf)\quad \text{for }x\in \,^\diamondsuit Z^{\e}, y\in Z^{\e}, \vf \in \Sp.
\end{equation}
Let $\rad Z^{\e}=\big\{ x\in Z^{\e} \bigm| \b_{\e}(y,x)=0 \text{ for any }y\in \,^\diamondsuit Z^{\e} \big\}$. 
Then $\rad Z^{\e}$ is a $\Sp$-submodule of $Z^{\e}$ by associativity of $\b_{\e}$. 
Put $L^{\e}=Z^{\e}/\rad Z^{\e}$. 
Assume that $R$ is a filed. 
Then $L^\e$ is an absolutely irreducible module or zero, and the set 
$\big\{L^{\e} \bigm| \e\in \vS^\Bp \text{ such that }\b_{\e}\not=0 \big\}$ 
is a complete set of non-isomorphic irreducible (right) $\Sp$-modules. 

Later we shall only consider the Weyl modules $Z^\e$ and irreducible modules $L^\e$ of $\Sp$ for $\e$ of the form $(\la,0)$. 
Note that the composition fuctors of $Z^{(\la,0)}$ are isomorphic to $L^{(\mu,0)}$ for some $\mu\in \vL^+$ by \cite[Proposition 3.3 (\roi)]{SW}.
\para
Let $\wh{\Sc}^\Bp$ be the $R$-submodule of $\Sp$ spanned by 
\[\ZC^\Bp \setminus \big\{\vf_{ST}\bigm| S,T\in \CT_0^\Bp(\la) \text{ for some } \la \in \vL^+ \big\}.\]
It is known by \cite{SW} that $\wh{\Sc}^\Bp$ is a two-sided ideal of $\Sp$. 
Thus, we can define the quotient algebra 
\[\oSp=\Sp\big/ \wh{\Sc}^\Bp.\] 
We denote by $\ol{\vf}$  the image of $\vf\in \Sp$ under the natural surjection $\pi: \Sp \ra \oSp$,  
and set 
\[\ol{\ZC}^\Bp =\big\{\ol{\vf}_{ST} \bigm| S,T \in \CT_0^\Bp(\la) \text{ for some } \la \in \vL^+ \big\}.\] 
Then $\ol{\ZC}^\Bp$ is a free $R$-basis of $\oSp$. 
By \cite[Theorem 2.13]{SW}, $\oSp$ turns out to be a cellular algebra with the cellular basis $\ol{\ZC}^\Bp$. 
Hence by the general theory of cellular algebra, the following results hold. 
For $\la \in \vL^+$, we can consider the standard (right) $\oSp$-module $\ol{Z}^\la$ 
with the free $R$-basis $\big\{ \ol{\vf}_T \bigm| T\in \CT_0^\Bp(\la) \big\}$. 
We call it a Weyl module of $\oSp$. 
We define the bilinear form $\lan \,,\, \ran_{\Bp}: \ol{Z}^\la\times \ol{Z}^\la \ra R$ by 
\[\lan \ol{\vf}_{S},\ol{\vf}_T \ran_{\Bp}\, \ol{\vf}_{UV} \equiv \ol{\vf}_{US}\ol{\vf}_{TV} \mod (\oSp)^{\vee \la} 
	\qquad \big(S,T \in \CT_0^\Bp(\la)\big),\] 
where $\lan \,,\,\ran_\Bp $ is determined independent of the choice $U,V \in \CT_0^\Bp(\la)$,  
and $(\oSp)^{\vee \la}$ is the $R$-submodule of $\oSp$ spanned by 
\[\big\{ \ol{\vf}_{ST} \bigm| S,T\in \CT_0^\Bp(\la^\prime) \text{ for some }\la^\prime \in \vL^+ \text{ such that } \la^\prime \trr \la \big\}.\] 
The bilinear form $\lan\,,\,\ran_{\Bp}$ is associative, namely we have 
\begin{equation} \label{asso-bi-barZ}
\lan \ol{x} \,\ol{\vf},\ol{y}\ran_{\Bp} =\lan \ol{x}, \ol{y}\,\ol{\vf}^\ast \ran_{\Bp} \quad \text{for any }\ol{x},\ol{y}\in \ol{Z}^\la, \ol{\vf}\in \oSp.
\end{equation}
Let $\rad \ol{Z}^\la =\big\{ \ol{x} \in \ol{Z}^\la \bigm| \lan \ol{x}, \ol{y} \ran_{\Bp}=0 \text{ for any } \ol{y} \in \ol{Z}^\la \big\}$, 
then $\rad \ol{Z}^\la$ is an $\oSp$-submodule of $\ol{Z}^\la$.  
Put $\ol{L}^\la =\ol{Z}^\la \big/ \rad \ol{Z}^\la$.  
Assume that $R$ is a field. 
Then $\ol{L}^\la$ is an absolutely irreducible module, and the set 
$\big\{\ol{L}^\la \bigm| \la \in \vL^+ \big\}$ is a complete set of non-isomorphic irreducible (right) $\oSp$-modules. 
\para \label{decom-number}
Assuming that $R$ is a field, we set, 
for $\la,\mu \in \vL^+$, 
\begin{align*}
&d_{\la\mu}= \big[W^\la : L^\mu \big] ,\\
&d_{\la\mu}^{(\la,0)}= \big[Z^{(\la,0)} :L^{(\mu,0)}\big] ,\\
&\ol{d}_{\la\mu}= \big[\ol{Z}^\la :\ol{L}^\mu \big], 
\end{align*}
where $\big[W^\la : L^\mu \big] $ is the decomposition number of $L^\mu$ in $W^\la$, and similarly for $\Sp$ and $\oSp$. 
The following theorem was proved in \cite{SW}.
\begin{thm}\cite[Theorem 3.13]{SW}\label{rel-d}
Assume that $R$ is a field. 
For $\la,\mu \in \vL^+$ such that $\alp(\la)=\alp(\mu)$, we have
\[\ol{d}_{\la\mu}=d_{\la\mu}^{(\la,0)}=d_{\la\mu}\]
\end{thm}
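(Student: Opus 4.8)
\medskip
\noindent\textbf{Proof outline.}
I would prove the two equalities $d_{\la\mu}^{(\la,0)}=\ol{d}_{\la\mu}$ and $\ol{d}_{\la\mu}=d_{\la\mu}$ separately. Fix $\la,\mu\in\vL^+$ with $\alp(\la)=\alp(\mu)=:\mathbf c$. The first equality comes from transporting modules of $\Sp$ to its quotient $\oSp$ along $\pi$; the second from an idempotent truncation inside $\Sc$.

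\emph{Step 1 ($d_{\la\mu}^{(\la,0)}=\ol{d}_{\la\mu}$).} The key point is that the two-sided ideal $\wh{\Sc}^\Bp$ of $\Sp$ annihilates the Weyl module $Z^{(\la,0)}$. To see this, take a standard basis element $\vf_{UT}$ with $U,T\in\CT_0^\Bp(\la)$ and any $\vf\in\wh{\Sc}^\Bp$. On one hand $\vf_{UT}\vf\in\wh{\Sc}^\Bp$, since the latter is a two-sided ideal; on the other hand, by the defining property of the standard basis $\ZC^\Bp$ of $\Sp$, we have $\vf_{UT}\vf\in\sum_{W\in\CT_0^\Bp(\la)}R\,\vf_{UW}+(\Sp)^{\vee(\la,0)}$. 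Since the $\vf_{UW}$ ($W\in\CT_0^\Bp(\la)$) lie in the cell $(\la,0)$, which is neither of label $1$ nor strictly above $(\la,0)$ in $\vS^\Bp$, no such $\vf_{UW}$ occurs in any element of $\wh{\Sc}^\Bp+(\Sp)^{\vee(\la,0)}$; comparing the two expansions in $\ZC^\Bp$ therefore forces all coefficients of the $\vf_{UW}$ to vanish, whence $\vf_{UT}\vf\in(\Sp)^{\vee(\la,0)}$ and $\vf$ acts as $0$ on $Z^{(\la,0)}$. Consequently $Z^{(\la,0)}$ and all its composition factors are inflated from $\oSp$ along $\pi$; moreover the inflation functor matches $\ol{Z}^\la$ with $Z^{(\la,0)}$ and $\ol{L}^\mu$ with $L^{(\mu,0)}$, as one sees by comparing the bases indexed by $\CT_0^\Bp(\la)$, the bilinear forms $\b_{(\la,0)}$ and $\lan\,,\,\ran_{\Bp}$ (given by the same formula in the standard/cellular basis), and using the identity $\pi\bigl((\Sp)^{\vee(\la,0)}\bigr)=(\oSp)^{\vee\la}$. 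Since composition multiplicities are preserved by inflation, $d_{\la\mu}^{(\la,0)}=\ol{d}_{\la\mu}$.

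\emph{Step 2 ($\ol{d}_{\la\mu}=d_{\la\mu}$).} Let $e=\sum_{\nu}\vf_{T^\nu T^\nu}\in\Sc$ be the sum of the weight idempotents over all $\nu\in\vL$ with $\alp(\nu)=\mathbf c$, where $T^\nu$ denotes the semistandard $\nu$-tableau of type $\nu$ (notation of \cite{DJM98}); then $e=e^\ast$, and the truncation functor $M\mapsto Me$ from right $\Sc$-modules to right $e\Sc e$-modules is exact. First, since $\ap$ is weakly increasing along $\trr$, the set $\{\kappa\in\vL^+\mid\ap(\kappa)>\ap(\la)\}$ is closed upwards in the dominance order, so the $R$-span $\mathcal{J}$ of the cellular basis elements of $\Sc$ of shape in this set is a two-sided ideal; and $\mathcal{J}$ annihilates $W^\kappa$ whenever $\alp(\kappa)=\mathbf c$, because a shape-$\kappa$ cellular basis element lies in neither $\mathcal{J}$ nor $\Sc^{\vee\kappa}$, so the coefficients defining the action of $\mathcal{J}$ on $W^\kappa$ all vanish. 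Second, $e\Sc e$ is again a cyclotomic $q$-Schur algebra, the one attached to $\{\nu\in\vL\mid\alp(\nu)=\mathbf c\}$, hence cellular by \cite{DJM98}, and the quotient $e\Sc e/(e\Sc e\cap\mathcal{J})$ is identified, via the common cellular basis $\{\vf_{ST}\mid S,T\in\CT_0^\Bp(\kappa)\ \text{for some}\ \kappa\ \text{with}\ \alp(\kappa)=\mathbf c\}$, with the block of $\oSp$ carrying $\ol{Z}^\la$ and $\ol{L}^\mu$. Using the first fact, $e\Sc e\cap\mathcal{J}$ acts as $0$ on $W^\la e$ and $L^\mu e$; and since $e=e^\ast$, truncation is compatible with the bilinear forms, so $W^\la e\cong\ol{Z}^\la$ and $L^\mu e\cong\ol{L}^\mu$, in particular $L^\mu e\cong\ol{L}^\mu\ne0$. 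Finally, $M\mapsto Me$ is exact, sends simple $\Sc$-modules to simple ones or $0$, and induces a bijection between the simple $\Sc$-modules $N$ with $Ne\ne0$ and the simple $e\Sc e$-modules; applying it to a composition series of $W^\la$ gives $d_{\la\mu}=[W^\la:L^\mu]=[W^\la e:L^\mu e]=[\ol{Z}^\la:\ol{L}^\mu]=\ol{d}_{\la\mu}$. Combining the two steps yields $\ol{d}_{\la\mu}=d_{\la\mu}^{(\la,0)}=d_{\la\mu}$.

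The substantive work is the bookkeeping with the two indexing posets: in Step 2, verifying that $e\Sc e/(e\Sc e\cap\mathcal{J})$ is indeed the relevant block of $\oSp$ and that this identification matches cell modules and cellular bilinear forms (so that $W^\la e\cong\ol{Z}^\la$ and $L^\mu e\cong\ol{L}^\mu$ on the nose), and in Step 1, matching the standardly based structure of $\Sp$ with the cellular structure of $\oSp$ under $\pi$ precisely enough to identify the modules and their forms. These checks are routine once the relevant bases are written out explicitly, but must be carried out with care since $\Sp$ is indexed by $\vS^\Bp$ whereas $\Sc$ and $\oSp$ are indexed by $\vL^+$.
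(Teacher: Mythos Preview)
This theorem is not proved in the present paper; it is quoted from \cite[Theorem 3.13]{SW}. The closest thing to a proof here is the argument for the $v$-analogue (Propositions~\ref{vd-bar-p} and~\ref{vd-sp-s}, giving Theorem~\ref{th-vdecom}), which specialises at $v=1$ to Theorem~\ref{rel-d} and follows the strategy of \cite{SW}. Your Step~1 matches that strategy: the equality $\ol d_{\la\mu}=d_{\la\mu}^{(\la,0)}$ is obtained by observing that $\wh{\Sc}^\Bp$ kills $Z^{(\la,0)}$, so that $Z^{(\la,0)}\cong\ol Z^\la$ and $L^{(\mu,0)}\cong\ol L^\mu$ as $\Sp$-modules with matching bilinear forms (this is \cite[Lemma 3.2]{SW}, used in Proposition~\ref{vd-bar-p}).

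Your Step~2, however, is a genuinely different route for the equality $d_{\la\mu}^{(\la,0)}=d_{\la\mu}$ (equivalently $\ol d_{\la\mu}=d_{\la\mu}$). The paper and \cite{SW} stay inside the chain $\oSp\leftarrow\Sp\hookrightarrow\Sc$: one uses the $\Sp$-embedding $f_\la\colon Z^{(\la,0)}\hookrightarrow W^\la$ of \cite[Lemma 3.5]{SW} together with the isomorphism $Z^{(\la,0)}\otimes_{\Sp}\Sc\cong W^\la$ of \cite[Proposition 3.6]{SW}, and then a two-sided inequality argument (\cite[Propositions 3.11, 3.12]{SW}, mirrored here in Lemmas~\ref{Sp-include}--\ref{S-include} and Proposition~\ref{vd-sp-s}) to pin down the multiplicities. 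You instead bypass $\Sp$ entirely: truncating by the idempotent $e=\sum_{\alp(\nu)=\mathbf c}\vf_{T^\nu T^\nu}$ realises $e\Sc e$ as $\Sc(\vL_{\mathbf c})$, and after killing the cell ideal $e\mathcal J e$ supported on shapes with $\ap(\kappa)>\ap(\la)$ you land directly in the $\mathbf c$-block of $\oSp$, whence $W^\la e\cong\ol Z^\la$, $L^\mu e\cong\ol L^\mu$, and exactness of $M\mapsto Me$ gives the result. This is correct (the key point, that the surviving cellular basis elements and their structure constants coincide in $e\Sc e/e\mathcal J e$ and in the $\mathbf c$-block of $\oSp$, holds because any $\vf_{S'T'}\in\ZC^\Bp$ with types in $\vL_{\mathbf c}$ but not in some $\CT_0^\Bp(\kappa')$ must have $\ap(\kappa')>\ap(\la)$), and it is more self-contained than the induction/restriction argument. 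On the other hand, the paper's approach through $\Sp$ is what makes the refinement to Jantzen filtrations work: the embedding $f_\la$ and the tensor-induction description of $W^\la$ are exactly what allow one to compare $Z^{(\la,0)}(i)$ with $W^\la(i)$ layer by layer, which your idempotent argument does not obviously do.
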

\para
For $\mu=(\mu^{(1)},\cdots ,\mu^{(r)}) \in \vL$, we write it in the form $\mu=(\mu^{[1]},\cdots ,\mu^{[g]})$, 
where $\mu^{[i]}=(\mu^{(p_i+1)},\cdots ,\mu^{(p_i +r_{i})})$. 
According to the expression of $\mu$ as above, 
$T=(T^{(1)},\cdots, T^{(r)}) \in \CT_0(\la)$ can be expressed as $T=(T^{[1]},\cdots, T^{[g]})$ with $T^{[i]}=(T^{(p_i +1)},\cdots ,T^{(p_i+r_i)})$. 
By \cite[Lemma 4.3 (\roiii)]{SW}, 
we have a bijection $\CT_0^\Bp(\la,\mu)\simeq \CT_0(\la^{[1]},\mu^{[1]})\times \cdots \times \CT_0(\la^{[g]},\mu^{[g]})$ 
given by the map $T \mapsto (T^{[1]},\cdots, T^{[g]})$. 
Thus we have a bijection $\CT_0^\Bp(\la)\simeq \CT_0(\la^{[1]})\times \cdots \times \CT_0(\la^{[g]})$. 

We write $\Bm=(m_1,\cdots ,m_r)$ in the form $\Bm=(\Bm^{[1]},\cdots,\Bm^{[g]})$, where $\Bm^{[k]}=(m_{p_k+1},\cdots,m_{p_k+r_k})$. 
For each $n_k\in \ZZ_{\geq 0}$, put $\vL_{n_k}=\wt{\CP}_{n_k,r_k}(\Bm^{[k]})$, and $\vL_{n_k}^+=\CP_{n_k,r_k}(\Bm^{[k]})$.
($\vL_{n_k}$ or $\vL_{n_k}^+$ is regarded as the empty set if $n_k=0$.) 
Let $\Sc(\vL_{n_k})$ be the cyclotomic $q$-Schur algebra associated to the Ariki-Koike algebra $\He_{n_k,r_k}$ 
with parameters $q,Q_{p_k+1},\cdots ,Q_{p_k+r_k}$. 
Let $\D_{n,g}$ be the set of $(n_1,\cdots ,n_g)\in \ZZ_{\geq 0}^{g}$ such that $n_1+\cdots +n_g=n$. 
Then we have the following decomposition theorem of $\oSp$ by \cite[Theorem 4.15]{SW}.
\begin{equation} \label{decom-barSp}
\oSp(\vL) \cong \bigoplus_{(n_1,\cdots ,n_g) \in \D_{n,g}} \Sc(\vL_{n_1})\otimes \cdots \otimes \Sc(\vL_{n_g}) \quad \textit{as } R\textit{-algebra},  
\end{equation}
under the isomorphism given by 
\begin{equation} \label{decom-basis}
\ol{\vf}_{ST} \mapsto \vf_{S^{[1]} T^{[1]}} \otimes \cdots \otimes \vf_{S^{[g]} T^{[g]}} \qquad \text{for }S,T \in \CT_0^\Bp(\la).
\end{equation}

Assuming that $R$ is a field, 
for $\la^{[k]} \in \vL_{n_k}$, 
let $W^{\la^{[k]}}$ be the Weyl module of $\Sc(\vL_{n_k})$, and $L^{\la^{[k]}}=W^{\la^{[k]}}/\rad W^{\la^{[k]}}$ be the irreducible module. 
By \cite[Corollary 4.16 ]{SW}, the following properties hold.  
Under the isomorphism in (\ref{decom-barSp}), we have, for $\la,\mu \in \vL^+$, 
\begin{align} 
\label{decom-Weyl}
&\ol{Z}^\la \cong W^{\la^{[1]}}\otimes \cdots \otimes W^{\la^{[g]}},\\
\label{decom-irr}
&\ol{L}^\mu\cong L^{\mu^{[1]}} \otimes \cdots \otimes L^{\mu^{[g]}}, \\
\label{decom-com}
&\big[ \ol{Z}^\la : \ol{L}^\mu \big] =
	\begin{cases} \prod_{k=1}^{g}\big[ W^{\la^{[k]}} : L^{\mu^{[k]}} \big] &\text{if }\alp(\la)=\alp(\mu)\\
		0 &\text{otherwise} \end{cases}.
\end{align}

Under the isomorphism in (\ref{decom-Weyl}), a bilinear form $\lan\,,\,\ran_{\BP}$ on $\ol{Z}^\la$ 
decomposes to a product of bilinear forms on $W^{\la^{[k]}}$ for $k=1,\cdots, g$, namely we have the following lemma. 
\begin{lem} \label{bi-decom}
For $S,T\in \CT_0^\Bp(\la)$, we have
\[\lan \ol{\vf}_S , \ol{\vf}_T \ran_\Bp=\lan \vf_{S^{[1]}}, \vf_{T^{[1]}}\ran \cdots \lan \vf_{S^{[g]}}, \vf_{T^{[g]}}\ran,\]
where $\lan\vf_{S^{[k]}},\vf_{T^{[k]}}\ran $ denotes the bilinear form on $W^{\la^{[k]}}$ for $k=1,\cdots,g$.
\end{lem}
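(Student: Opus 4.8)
The plan is to compute both sides of the asserted identity directly from the definitions of the bilinear forms via the cellular-type structure constants, and to transport the computation through the algebra isomorphism (\ref{decom-barSp}). Fix $\la \in \vL^+$ and $S,T \in \CT_0^\Bp(\la)$. Choose auxiliary tableaux $U,V \in \CT_0^\Bp(\la)$; by the defining relation for $\lan\,,\,\ran_\Bp$ we have
\[
\lan \ol{\vf}_S,\ol{\vf}_T\ran_\Bp\,\ol{\vf}_{UV} \equiv \ol{\vf}_{US}\,\ol{\vf}_{TV} \mod (\oSp)^{\vee\la}.
\]
First I would apply the isomorphism (\ref{decom-barSp}), using the explicit formula (\ref{decom-basis}) on the cellular basis elements $\ol{\vf}_{UV}$, $\ol{\vf}_{US}$, $\ol{\vf}_{TV}$. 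Since the map $\ol{\vf}_{XY}\mapsto \vf_{X^{[1]}Y^{[1]}}\otimes\cdots\otimes\vf_{X^{[g]}Y^{[g]}}$ is an algebra isomorphism, the product $\ol{\vf}_{US}\,\ol{\vf}_{TV}$ goes to $\prod_{k=1}^g \vf_{U^{[k]}S^{[k]}}\vf_{T^{[k]}V^{[k]}}$ (tensor-factorwise product), and $(\oSp)^{\vee\la}$ maps to the span of the tensors $\vf_{X^{[1]}Y^{[1]}}\otimes\cdots$ with $X,Y\in\CT_0^\Bp(\la')$ for $\la'\trr\la$; I need to identify this image, inside $\Sc(\vL_{n_1})\otimes\cdots\otimes\Sc(\vL_{n_g})$, with the ideal $\sum_k \Sc(\vL_{n_1})\otimes\cdots\otimes\Sc(\vL_{n_k})^{\vee\la^{[k]}}\otimes\cdots\otimes\Sc(\vL_{n_g})$ relevant to the Weyl-module bilinear forms on the factors. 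This identification is exactly the tensor-product decomposition of the dominance order on $\CT_0^\Bp$-tableaux afforded by the bijection $\CT_0^\Bp(\la,\mu)\simeq\prod_k\CT_0(\la^{[k]},\mu^{[k]})$ recalled just before the lemma, together with (\ref{decom-Weyl}); I would cite \cite[Lemma 4.3, Theorem 4.15, Corollary 4.16]{SW} for the compatibility.

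Next I would compute, in each tensor factor $\Sc(\vL_{n_k})$, using the defining relation of the Weyl-module form $\lan\,,\,\ran$ on $W^{\la^{[k]}}$ with the auxiliary pair $U^{[k]},V^{[k]}\in\CT_0(\la^{[k]})$ (which are legitimate auxiliary tableaux because $U,V\in\CT_0^\Bp(\la)$ forces $\alp(U)=\alp(V)=\alp(\la)$, so each $U^{[k]},V^{[k]}$ has the correct type):
\[
\vf_{U^{[k]}S^{[k]}}\,\vf_{T^{[k]}V^{[k]}} \equiv \lan\vf_{S^{[k]}},\vf_{T^{[k]}}\ran\,\vf_{U^{[k]}V^{[k]}} \mod \Sc(\vL_{n_k})^{\vee\la^{[k]}}.
\]
Taking the tensor product of these $g$ congruences and comparing with the transported version of the original congruence for $\lan\ol{\vf}_S,\ol{\vf}_T\ran_\Bp\,\ol{\vf}_{UV}$, both expressions are now congruent, modulo the same ideal, to scalar multiples of the basis element $\vf_{U^{[1]}V^{[1]}}\otimes\cdots\otimes\vf_{U^{[g]}V^{[g]}}$. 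Since $U,V\in\CT_0^\Bp(\la)$, this tensor is the image of $\ol{\vf}_{UV}$, which is a genuine (nonzero) cellular basis element of $\oSp$ and hence is nonzero modulo $(\oSp)^{\vee\la}$; comparing coefficients gives $\lan\ol{\vf}_S,\ol{\vf}_T\ran_\Bp = \prod_{k=1}^g\lan\vf_{S^{[k]}},\vf_{T^{[k]}}\ran$, as claimed.

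The main obstacle is the bookkeeping in the first step: one must check carefully that the isomorphism (\ref{decom-barSp}) carries the ideal $(\oSp)^{\vee\la}$ exactly onto $\sum_{k}\bigotimes_{j<k}\Sc(\vL_{n_j})\otimes\Sc(\vL_{n_k})^{\vee\la^{[k]}}\otimes\bigotimes_{j>k}\Sc(\vL_{n_j})$, so that "congruence modulo $(\oSp)^{\vee\la}$" on the left corresponds precisely to the product of the "congruences modulo $\Sc(\vL_{n_k})^{\vee\la^{[k]}}$" on the right. This is essentially the content of \cite[Theorem 4.15, Corollary 4.16]{SW}, in particular of (\ref{decom-Weyl}), since $\ol{Z}^\la\cong W^{\la^{[1]}}\otimes\cdots\otimes W^{\la^{[g]}}$ already encodes that the Weyl module of $\oSp$ at $\la$ is the external tensor product of the Weyl modules at the $\la^{[k]}$; the bilinear form statement is then the natural refinement, and there is no genuinely new difficulty — only the need to track the auxiliary tableaux $U,V$ through the bijection and to note that a product of bilinear forms on a tensor product of modules is the canonical bilinear form on the tensor product. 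Everything else is a routine unwinding of the cellular structure constants.
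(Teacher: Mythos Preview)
Your proposal is correct and follows exactly the paper's approach: transport through the isomorphism (\ref{decom-barSp}), compute in each tensor factor using the defining congruence for the bilinear form on $W^{\la^{[k]}}$, and compare coefficients of $\ol\vf_{UV}$. One simplification: the paper sidesteps your ``main obstacle'' by never attempting an exact identification of the image of $(\oSp)^{\vee\la}$ with $\sum_k \Sc(\vL_{n_1})\otimes\cdots\otimes\Sc(\vL_{n_k})^{\vee\la^{[k]}}\otimes\cdots$ (indeed that equality fails), and instead only uses the one-sided inclusion of the factor-wise error terms into $(\oSp)^{\vee\la}$, which is all that is needed to read off the scalar.
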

\begin{proof}
Fix $U,V\in \CT_0^\Bp(\la)$. Then by (\ref{decom-basis}) and the definition of the bilinear form on $W^{\la^{[k]}}$, we have 
\begin{align*}
\ol{\vf}_{US}\ol{\vf}_{TV}&=( \vf_{U^{[1]} S^{[1]}}\otimes \cdots \otimes \vf_{U^{[g]} S^{[g]}})
		(\vf_{T^{[1]} V^{[1]}}\otimes \cdots \otimes \vf_{T^{[g]} V^{[g]}})\\
	&=\vf_{U^{[1]} S^{[1]}}\vf_{T^{[1]} V^{[1]}}\otimes \cdots \otimes \vf_{U^{[g]} S^{[g]}}\vf_{T^{[g]} V^{[g]}}\\
	&\equiv \lan\vf_{S^{[1]}},\vf_{T^{^[1]}}\ran \vf_{U^{[1]} V^{[1]}}\otimes \cdots \otimes \lan\vf_{S^{[g]}},\vf_{T^{^[g]}}\ran \vf_{U^{[g]} V^{[g]}}\\
		&\hspace{12em}\mod \Sc(\vL_{n_1})^{\vee\la^{[1]}}\otimes \cdots \otimes \Sc(\vL_{n_k})^{\vee \la^{[g]}} \\
	&= \lan\vf_{S^{[1]}},\vf_{T^{^[1]}}\ran \cdots \lan\vf_{S^{[g]}},\vf_{T^{^[g]}}\ran \vf_{U^{[1]} V^{[1]}}\otimes \cdots \otimes\vf_{U^{[g]} V^{[g]}}\\
	&= \lan\vf_{S^{[1]}},\vf_{T^{^[1]}}\ran \cdots \lan\vf_{S^{[g]}},\vf_{T^{^[g]}}\ran \ol{\vf}_{U V}.
\end{align*}
Since $\Sc(\vL_{n_1})^{\vee \la^{[1]}}\otimes \cdots \otimes \Sc(\vL_{n_k})^{\vee \la^{[g]}}\subset (\oSp)^{\vee \la}$, we see that  
\[\lan \ol{\vf}_S ,\ol{\vf}_T\ran_{\Bp} \ol{\vf}_{UV}\equiv \ol{\vf}_{US}\ol{\vf}_{TV} 
	\equiv  \lan\vf_{S^{[1]}},\vf_{T^{^[1]}}\ran \cdots \lan\vf_{S^{[g]}},\vf_{T^{^[g]}}\ran \ol{\vf}_{U V} \mod (\oSp)^{\vee \la}.\]
The lemma is proved.
\end{proof}
\remark
For the isomorphism in (\ref{decom-Weyl}), we do not need to assume that $R$ is a field. 
But for (\ref{decom-irr}) and (\ref{decom-com}), we need that $R$ is a field.
\begin{thm}\cite[Theorem 4.17]{SW} \label{decom-d}
Assume that $R$ is a field. 
For $\la,\mu \in \vL^+$ such that $\alp(\la)=\alp(\mu)$, we have the following.
\[d_{\la\mu}=\ol{d}_{\la\mu}=\prod_{k=1}^{g}d_{\la^{[k]}\mu^{[k]}},\]
where $d_{\la^{[k]}\mu^{[k]}}=\big[W^{\la^{[k]}} : L^{\mu^{[k]}}\big]$. 
\end{thm}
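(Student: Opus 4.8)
The plan is to deduce the statement directly from the two ingredients already assembled in this section: Theorem~\ref{rel-d} and the tensor-product decomposition~(\ref{decom-barSp}) of $\oSp$. There is essentially no new construction needed; the point is simply to compose these facts correctly and keep track of the index $\alp(\la)=\alp(\mu)$.

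First I would invoke Theorem~\ref{rel-d}: since $\alp(\la)=\alp(\mu)$ we already have $d_{\la\mu}=\ol{d}_{\la\mu}$, so it remains to prove $\ol{d}_{\la\mu}=\prod_{k=1}^{g}d_{\la^{[k]}\mu^{[k]}}$. Set $(n_1,\dots,n_g)=\alp(\la)=\alp(\mu)\in\D_{n,g}$. Under the isomorphism~(\ref{decom-barSp}), the Weyl module $\ol{Z}^\la$ is supported on the single summand $\Sc(\vL_{n_1})\otimes\cdots\otimes\Sc(\vL_{n_g})$ and, by~(\ref{decom-Weyl}), is identified with the outer tensor product $W^{\la^{[1]}}\otimes\cdots\otimes W^{\la^{[g]}}$ of Weyl modules of the $\Sc(\vL_{n_k})$; likewise, by~(\ref{decom-irr}), $\ol{L}^\mu\cong L^{\mu^{[1]}}\otimes\cdots\otimes L^{\mu^{[g]}}$. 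Since $R$ is a field and each $L^{\mu^{[k]}}$ is absolutely irreducible, the irreducible modules of a tensor product of the $\Sc(\vL_{n_k})$ are exactly the outer tensor products of irreducibles and composition multiplicities multiply across the tensor factors; this is precisely the content of~(\ref{decom-com}), which gives $\ol{d}_{\la\mu}=\bigl[\ol{Z}^\la:\ol{L}^\mu\bigr]=\prod_{k=1}^{g}\bigl[W^{\la^{[k]}}:L^{\mu^{[k]}}\bigr]=\prod_{k=1}^{g}d_{\la^{[k]}\mu^{[k]}}$. Combining this with $d_{\la\mu}=\ol{d}_{\la\mu}$ yields the theorem.

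The only place where genuine work is hidden---if one were not permitted to cite~(\ref{decom-com})---is the claim that composition multiplicities factor through the tensor decomposition: one must check that the semisimple quotient of $\Sc(\vL_{n_1})\otimes\cdots\otimes\Sc(\vL_{n_g})$ splits as the corresponding tensor product of semisimple quotients, using absolute irreducibility of the $L^{\mu^{[k]}}$ to avoid base-field subtleties, and then obtain a composition series of $W^{\la^{[1]}}\otimes\cdots\otimes W^{\la^{[g]}}$ by tensoring composition series of the individual $W^{\la^{[k]}}$. Given that~(\ref{decom-com}) and Theorem~\ref{rel-d} are already available, however, the proof is purely formal, and I expect the only care required is bookkeeping: making sure that the hypothesis $\alp(\la)=\alp(\mu)$ is used to place $\ol{Z}^\la$ and $\ol{L}^\mu$ in the \emph{same} summand of~(\ref{decom-barSp}), since for $\alp(\la)\neq\alp(\mu)$ the multiplicity vanishes.
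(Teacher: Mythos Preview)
Your proposal is correct. The paper does not actually prove this theorem: it is quoted as \cite[Theorem~4.17]{SW} in the review section, and the ingredients you invoke, namely Theorem~\ref{rel-d} (which is \cite[Theorem~3.13]{SW}) and the multiplicativity statement~(\ref{decom-com}) (which is \cite[Corollary~4.16]{SW}), are themselves quoted results from the same source. Your argument simply observes that Theorem~\ref{decom-d} is the concatenation of these two, which is exactly right and is presumably how \cite[Theorem~4.17]{SW} is deduced in the original paper as well.
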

\section{Decomposition numbers with Jantzen filtration} \label{def-vdecom}

\para 
In the rest of this paper, we assume that $R$ is a discrete valuation ring. Let $\wp$ be a unique maximal ideal of $R$ and $F=R/\wp$ be the residue filed.
Fix $\wh{q},\wh{Q}_1,\cdots ,\wh{Q}_r$ in $R$ and let $q=\wh{q} +\wp\,, Q_1=\wh{Q}_1+\wp\,, \cdots , Q_r=\wh{Q}_r+\wp$ be their canonical images in $F$. 
Moreover let $K$ be the quotient field of $R$. 
Then $(K,R,F)$ is a modular system with parameters. 
Let $\Sc_R=\Sc_R(\Lambda)$ be the cyclotomic $\wh{q}$-Schur algebra over $R$ with parameters $\wh{q}, \wh{Q}_1 , \cdots , \wh{Q}_r$ 
and $\Sc=\Sc(\Lambda)$ be the cyclotomic $q$-Schur algebra over $F$ with parameters $q,Q_1,\cdots ,Q_r$. 
Then $\Sc= \big(\Sc_R+\wp \Sc_R \big)\big/\wp \Sc_R$.

We consider the subalgebra $\Sp_R$ (resp. $\Sp$) of $\Sc_R$ (resp. $\Sc$) and its quotient $\oSp_R$ (resp. $\oSp$) as in the previous section 
with the notation there. Note that the subscript $R$ is used to indicate the objects related to $R$. 

For $\la\in \Lambda^+$, let $W_R^\la$ be the Weyl module of $\Sc_R$. For $i\in \ZZ_{\geq0}$, we set 
\[W_R^\la(i) = \big\{ x\in W_R^\la \bigm| \lan x,y \ran \in \wp^i \text{ for any } y \in W_R^\la \big\} \]
and define
\[W^\la(i) =\big( W_R^\la(i)+ \wp W_R^\la \big) \big/\wp W_R^\la .\]
Then $W^\la = W^\la(0)$ is the Weyl module of $\Sc$, and we have the Jantzen filtration of $W^\la$, 
\[W^\la =W^\la(0)\supset W^\la(1) \supset W^\la(2) \supset \cdots .\]
Similarly, by using the bilinear form $\lan\,,\,\ran_{\Bp}$ on $\ol{Z}_R^\la$,
one can  define the Jantzen filtration of $\oSp$-module $\ol{Z}^\lambda$ 
\[\ol{Z}^\la=\ol{Z}^\la(0)\supset \ol{Z}^\la(1) \supset \ol{Z}^\la(2) \supset \cdots .\]
Moreover for the Weyl module $Z^{(\la ,0)}_R$ of $\Sp_R$, we set 
\[Z_R^{(\la,0)}(i)=\big\{x \in Z_R^{(\la,0)}\bigm| 
	\b_\la (y,x) \in \wp^i \text{ for any }  y \in \,^\diamondsuit Z_R^{(\la,0)} \big\}\]
and  define 
\[Z^{(\la,0)}(i)=\big( Z_R^{(\la,0)}(i) + \wp Z_R^{(\la,0)} \big)\big/ \wp Z_R^{(\la,0)}.\]
Then we have the Jantzen filtration of $Z^{(\la,0)}$
\[Z^{(\la,0)}=Z^{(\la,0)}(0)\supset Z^{(\la,0)}(1)\supset Z^{(\la,0)}(2) \supset \cdots .\]

Since $W^\la $ is  a finite dimentional $F$-vector space, 
one can find a positive integer $k$ such that $W^\la(k^\prime)=W^\la(k) \text{ for any } k^\prime > k$. 
We choose a minimal $k$ in such numbers and set $W^\la (k+1)=0$. Then the Jantzen filtration of $W^\la$ becomes a finite sequence. 
Similarly, Jantzen filtrations of $Z^{(\la,0)}$ and $\ol{Z}^\la$  also become finite sequences. 

We can easily see that 
$W^\la(i)$ \big(resp. $Z^{(\la,0)}(i)$, $\ol{Z}^\la(i)$\big) is a $\Sc$-submodule of $W^\la$ \big(resp. $\Sp$-submodule of $Z^{(\la,0)}$, 
$\oSp$-submodule of $\ol{Z}^\la$\big) 
by associativity of the bilinear form (\ref{asso-bi-W}) \big(resp. (\ref{asso-bi-Z}), (\ref{asso-bi-barZ}) \big).

\para \label{def-vdecome}
Take $\la, \mu \in \vL^+$, and $W^\la=W^\la(0)\supset W^\la(1)\supset \cdots $ be the Jantzen filtration of $W^\la$.
Let $ \big[W^\la(i)/W^\la(i+1):L^\mu \big]$ be the composition multiplicity of $L^\mu$ in $W^\la(i)/W^\la(i+1)$. 
Let $v$ be an indeterminate. We define a polynomial $d_{\la\mu}(v)$ by 
\[d_{\la\mu}(v)=\sum_{i \geq 0} \big[W^\la(i)/W^\la(i+1):L^\mu \big] \cdot v^i .\]
Similarly we define, for $Z^{(\la,0)}$ and $\ol{Z}^\la$,  
\[d_{\la\mu}^{(\la,0)}(v)=\sum_{i \geq 0} \big[Z^{(\la,0)}(i)/Z^{(\la,0)}(i+1) : L^{(\mu,0)}\big] \cdot v^i,\] 
\[\ol{d}_{\la\mu}(v)=\sum_{i \geq 0} \big[ \ol{Z}^\la(i)/\ol{Z}^\la(i+1) : \ol{L}^\mu \big] \cdot v^i .\]
Thus $d_{\la\mu}(v)$, $d_{\la\mu}^{(\la,0)}(v)$ and $\ol{d}_{\la\mu}(v)$ are porynomials whose coefficients are non-negative integers. 
Note that since the Jantzen filtration of $W^\la $, etc. are finite sequences, these summations are finite sums.
We call $d_{\la \mu}(v)$ (resp. $d_{\la\mu}^{(\la,0)}(v)$, $\ol{d}_{\la\mu}(v)$) 
	decomposition number with Jantzen filtration of $\Sc$ (resp. $\Sp$, $\oSp$). 
We also call them $v$-decomposition numbers as they coincide at $v=1$ with  decomposition numbers given in \ref{decom-number}.

We have the following relation between $d_{\la\mu}^{(\la,0)}(v)$ and $\ol{d}_{\la\mu}(v)$.

\begin{prop} \label{vd-bar-p}
For $\la,\mu \in \vL$, we have 
\begin{enumerate}
\item \label{q1} If $\alp(\la)\not= \alp(\mu)$, then $\ol{d}_{\la\mu}(v)=d_{\la\mu}^{(\la,0)}(v)=0$.
\item \label{q2} $\big[ \ol{Z}^\la(i)/\ol{Z}^\la(i+1) : \ol{L}^\mu \big] =\big[ Z^{(\la,0)}(i)/ Z^{(\la,0)}(i+1):L^{(\mu,0)}\big] 
	\quad \textit{for any }i \geq 0$. \\
Hence we have $\ol{d}_{\la\mu}(v)=d_{\la\mu}^{(\la,0)}(v)$.
\end{enumerate}
\end{prop}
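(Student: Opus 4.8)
The plan is to reduce both parts to one fact: over $R$, the Weyl module $Z_R^{(\la,0)}$ of $\Sp_R$ and the Weyl module $\ol{Z}_R^\la$ of $\oSp_R$ (regarded as an $\Sp_R$-module through the surjection $\pi\colon\Sp_R\to\oSp_R$) are isomorphic, and under this isomorphism the bilinear form $\b_{(\la,0)}$ corresponds to $\lan\,,\,\ran_\Bp$. Much of this is essentially in \cite{SW} --- in \cite[Prop.~3.3]{SW} and the proof of Theorem \ref{rel-d} --- so the work is to see that it holds over the discrete valuation ring $R$ and not only over a field, and to match the forms. For the latter I would first record that $\pi\big((\Sp_R)^{\vee(\la,0)}\big)=(\oSp_R)^{\vee\la}$: the cells strictly above $(\la,0)$ are the $(\nu,\ve)$ with $\nu\trr\la$ together with $(\la,1)$, and $\pi$ annihilates exactly the standard basis elements attached to the cells $(\nu,1)$, so the image of $(\Sp_R)^{\vee(\la,0)}$ is precisely the $R$-span of the $\ol{\vf}_{UV}$ with $U,V\in\CT_0^\Bp(\nu)$, $\nu\trr\la$. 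Then the bijection $\vf_T^{(\la,0)}\mapsto\ol{\vf}_T$ ($T\in\CT_0^\Bp(\la)$) is an $R$-linear isomorphism $Z_R^{(\la,0)}\to\ol{Z}_R^\la$; it is $\Sp_R$-linear because the structure constants of both module actions are read off from the same congruences $\vf_{ST}\vf\equiv\sum_{T'}\gamma_{T,T'}(\vf)\,\vf_{ST'}$, taken modulo $(\Sp_R)^{\vee(\la,0)}$ on one side and modulo $(\oSp_R)^{\vee\la}$ on the other; and applying $\pi$ to the defining congruence of $\b_{(\la,0)}$ and comparing it with that of $\lan\,,\,\ran_\Bp$ yields $\b_{(\la,0)}\big(\vf_S^{(\la,0)},\vf_T^{(\la,0)}\big)=\lan\ol{\vf}_S,\ol{\vf}_T\ran_\Bp$, the only wrinkle being an $S\leftrightarrow T$ transposition absorbed by the symmetry of $\lan\,,\,\ran_\Bp$ (which follows by applying the anti-automorphism $\ast$).

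Granting this, part \ref{q2} is immediate. Since the forms correspond, the isomorphism $Z_R^{(\la,0)}\cong\ol{Z}_R^\la$ carries $Z_R^{(\la,0)}(i)$ onto $\ol{Z}_R^\la(i)$ for every $i$, hence after reduction modulo $\wp$ it induces isomorphisms $Z^{(\la,0)}(i)/Z^{(\la,0)}(i+1)\cong\ol{Z}^\la(i)/\ol{Z}^\la(i+1)$ of $\Sp$-modules (in fact of $\oSp$-modules). Applying this with $\la$ replaced by $\mu$ at the level $i=0$, and using that the first Jantzen layer is the radical over $F$, gives $L^{(\mu,0)}=Z^{(\mu,0)}/Z^{(\mu,0)}(1)\cong\ol{Z}^\mu/\ol{Z}^\mu(1)=\ol{L}^\mu$ as $\Sp$-modules. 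Combining the last two isomorphisms, $\big[Z^{(\la,0)}(i)/Z^{(\la,0)}(i+1):L^{(\mu,0)}\big]=\big[\ol{Z}^\la(i)/\ol{Z}^\la(i+1):\ol{L}^\mu\big]$ for all $i\geq0$, and multiplying by $v^i$ and summing gives $d_{\la\mu}^{(\la,0)}(v)=\ol{d}_{\la\mu}(v)$.

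Part \ref{q1} then follows by additivity of composition multiplicities along the Jantzen filtration: $\sum_{i\geq0}\big[\ol{Z}^\la(i)/\ol{Z}^\la(i+1):\ol{L}^\mu\big]=[\ol{Z}^\la:\ol{L}^\mu]$, which is $0$ when $\alp(\la)\neq\alp(\mu)$ by (\ref{decom-com}); as the summands are non-negative integers, each vanishes, so $\ol{d}_{\la\mu}(v)=0$, and $d_{\la\mu}^{(\la,0)}(v)=0$ by part \ref{q2}. I expect the only genuine obstacle to be the content of the first paragraph --- the form-compatible isomorphism $Z_R^{(\la,0)}\cong\ol{Z}_R^\la$ over $R$ --- and even there the effort is essentially bookkeeping: keeping track of the $\ast$-duality built into the standardly based structure of $\Sp$ against the cellular structure of $\oSp$, and confirming that the relevant statements of \cite{SW} remain valid over $R$ rather than only over a field.
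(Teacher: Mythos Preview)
Your proposal is correct and follows essentially the same approach as the paper: both establish the $\Sp$-isomorphism $Z^{(\la,0)}\cong\ol{Z}^\la$ (cited in the paper as \cite[Lemma~3.2]{SW}) together with the matching of bilinear forms $\b_{(\la,0)}\big(\vf_S^{(\la,0)},\vf_T^{(\la,0)}\big)=\lan\ol{\vf}_T,\ol{\vf}_S\ran_\Bp$, from which the coincidence of the Jantzen filtrations is immediate. The only minor differences are that the paper handles part~(\ref{q1}) by directly citing \cite[Proposition~3.3]{SW} for both vanishings rather than going through part~(\ref{q2}), and that the $S\leftrightarrow T$ transposition you noted need not be absorbed by symmetry since the definition of $Z_R^{(\la,0)}(i)$ already has the variable in the second slot of $\b_{(\la,0)}$.
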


\begin{proof}
(\ref{q1}) is  clear since $d_{\la\mu}^{(\la,0)}=\ol{d}_{\la\mu}=0$ by \cite[Poposition 3.3]{SW}.

Recall that $\ol{Z}^\la \cong Z^{(\la,0)}$ and $\ol{L}^\mu \cong L^{(\mu,0)}$ as $\Sp$-modules by \cite[Lemma 3.2]{SW}. 
By definition, we have $\b_\la\big(\vf_S^{(\la,0)},\vf_T^{(\la,0)}\big)=\lan \ol{\vf}_T , \ol{\vf}_S \ran_{\Bp}$ 
	for any $S,T \in \CT_0^\Bp(\la)$. 
Then under the isomorphism $\ol{Z}^\la \cong Z^{(\la,0)}$, 
the Jantzen filtration of $\ol{Z}^\la$ coincides with that of $Z^{(\la,0)}$. So (\ref{q2}) is proved.
\end{proof}

\para
Next, we consider the relation between $d_{\la\mu}^{(\la,0)}(v)$ and $d_{\la\mu}(v)$. In order to see this we prepare two lemmas.
Recall that there exists an injective $\Sp$-homomorphism $f_\la :Z^{(\la,0)} \hra W^\la$ such that 
$f_\la(\vf_{T}^{(\la,0)})=\vf_T$ for $T\in \CT_0^\Bp(\la)$ 
by \cite[Lemma 3.5]{SW}
and that $Z^{(\la,0)}\otimes_{\Sp} \Sc \cong W^\la$ as $\Sc$-module by \cite[Proposition 3.6]{SW}. 
Let $\io_i: Z^{(\la,0)}(i)\hra Z^{(\la,0)}$ be an inclusion map. 
Then $(\io_i \otimes \id_\Sc) \big(Z^{(\la,0)}(i)\otimes_{\Sp}\Sc\big)$ is the $\Sc$-submodule of $Z^{(\la,0)}\otimes_{\Sp} \Sc$. 
Similar results hold also for $R$.
We have the following.

\begin{lem} \label{Sp-include}
Let $\la\in \vL^+ $. For any $i \geq 0$, we have 
\[f_\la^{-1}\big(W^\la(i)\big) =Z^{(\la,0)}(i)\]
\end{lem}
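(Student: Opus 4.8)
The plan is to prove both inclusions $f_\la^{-1}(W^\la(i)) \subseteq Z^{(\la,0)}(i)$ and $Z^{(\la,0)}(i) \subseteq f_\la^{-1}(W^\la(i))$, working directly from the definitions of the submodules via the bilinear forms. The essential tool is the compatibility between $\b_\la$ on ${}^\diamondsuit Z^{(\la,0)}_R \times Z^{(\la,0)}_R$ and $\lan\,,\,\ran$ on $W^\la_R$ through the map $f_\la$; more precisely, I expect that for $x \in Z^{(\la,0)}_R$ and a suitable set of test elements $y$, the values $\b_\la(y,x)$ and $\lan f_\la(x), f_\la(y')\ran$ agree (up to reindexing of left versus right modules), since both are computed by the same structure constants $\vf_{UT}\vf_{SV} \equiv (\cdot)\,\vf_{UV}$ modulo the relevant ideals, and the ideal $(\Sp_R)^{\vee\e}$ for $\e=(\la,0)$ is the restriction of $\Sc_R^{\vee\la}$ to $\Sp_R$. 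So I would first record this comparison: for $S \in I((\la,0))=\CT_0^\Bp(\la)$ and $T \in J((\la,0))=\CT_0^\Bp(\la)$, one has $\b_\la(\vf_S^{(\la,0)}, \vf_T^{(\la,0)}) = \lan \vf_T, \vf_S\ran$, the latter bilinear form on $W^\la_R$ restricted to the span of $\{\vf_T \mid T \in \CT_0^\Bp(\la)\}$.

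For the inclusion $Z^{(\la,0)}(i) \subseteq f_\la^{-1}(W^\la(i))$: take $x \in Z^{(\la,0)}_R(i)$, so $\b_\la(y,x) \in \wp^i$ for all $y \in {}^\diamondsuit Z^{(\la,0)}_R$. I need $\lan f_\la(x), z\ran \in \wp^i$ for all $z \in W^\la_R$. Writing $z$ in the basis $\{\vf_T \mid T \in \CT_0(\la)\}$, and using associativity of $\lan\,,\,\ran$ together with the fact that $W^\la_R \cong Z^{(\la,0)}_R \otimes_{\Sp_R} \Sc_R$ (so that $z = \sum f_\la(x_j)\vf_j$ for $x_j \in Z^{(\la,0)}_R$, $\vf_j \in \Sc_R$), one reduces $\lan f_\la(x), z\ran$ to a combination of values $\lan f_\la(x), f_\la(x_j)\vf_j\ran = \lan f_\la(x)\vf_j^\ast, f_\la(x_j)\ran$; and $f_\la(x)\vf_j^\ast$ need not lie in $f_\la(Z^{(\la,0)}_R)$, which is exactly where care is needed. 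The cleaner route is to observe that $\lan\,,\,\ran$ on $W^\la_R$ is entirely determined, via associativity and the $\Sc_R$-action, by its restriction to $f_\la(Z^{(\la,0)}_R) \times f_\la(Z^{(\la,0)}_R)$ — because $f_\la(\vf_T^{(\la,0)}) = \vf_T$ spans $W^\la_R$ over $\Sc_R$ — hence $\lan f_\la(x), z\ran \in \wp^i$ follows from $\b_\la(y,x) \in \wp^i$ by expanding $z$ through the action. For the reverse inclusion, take $x \in Z^{(\la,0)}_R$ with $f_\la(x) \in W^\la_R(i)$; then in particular $\lan f_\la(x), f_\la(y)\ran = \b_\la(y,x) \in \wp^i$ for every $y \in {}^\diamondsuit Z^{(\la,0)}_R = Z^{(\la,0)}_R$ (note $I((\la,0)) = J((\la,0))$ here), which says precisely $x \in Z^{(\la,0)}_R(i)$. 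Passing from the statements over $R$ to the statements over $F$ (i.e. the quotients by $\wp W^\la_R$, $\wp Z^{(\la,0)}_R$) is then routine, using that $f_\la$ is injective with $f_\la(\wp Z^{(\la,0)}_R) = f_\la(Z^{(\la,0)}_R) \cap \wp W^\la_R$ — which itself needs a small argument about $Z^{(\la,0)}_R$ being a direct summand of $W^\la_R$ as $R$-module, or at least that $W^\la_R / f_\la(Z^{(\la,0)}_R)$ is $R$-free.

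The main obstacle, as indicated above, is the forward inclusion $Z^{(\la,0)}(i) \subseteq f_\la^{-1}(W^\la(i))$: the subtlety is that testing against all of $W^\la_R$ involves the full $\Sc_R$-action, whereas $Z^{(\la,0)}_R(i)$ is only defined by testing against ${}^\diamondsuit Z^{(\la,0)}_R$ with the $\Sp_R$-action. The resolution I would pursue is to show that $W^\la_R = f_\la(Z^{(\la,0)}_R)\cdot \Sc_R$ and then use associativity $\lan a, b\vf\ran = \lan a\vf^\ast, b\ran$ to push everything back; the point is that even though $f_\la(x)\vf^\ast$ leaves the image of $f_\la$, its pairing against $f_\la(Z^{(\la,0)}_R)$ can be re-expressed, via the standardly-based structure of $\Sp_R$ and the ideal $(\Sp_R)^{\vee(\la,0)}$, in terms of $\b_\la$-values of $x$ against elements of ${}^\diamondsuit Z^{(\la,0)}_R$. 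This is the step I expect to require the most bookkeeping with the cellular/standard bases, and it is where I would spend the bulk of the proof.
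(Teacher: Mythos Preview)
Your compatibility formula $\b_\la(\vf_S^{(\la,0)},\vf_T^{(\la,0)})=\lan \vf_T,\vf_S\ran$ for $S,T\in\CT_0^\Bp(\la)$ is correct and is one of the two ingredients in the paper's proof. What you are missing is the second ingredient, which dissolves precisely the ``main obstacle'' you describe: for $S\in\CT_0^\Bp(\la)$ and $T\in\CT_0(\la)\setminus\CT_0^\Bp(\la)$ one has $\lan\vf_S,\vf_T\ran=0$. This holds because the cellular bilinear form on $W^\la_R$ vanishes between basis vectors $\vf_S,\vf_T$ of different type (the product $\vf_{US}\vf_{TV}$ is then zero as a map $M^{\text{type}(V)}\to M^{\text{type}(U)}$), and any $T\notin\CT_0^\Bp(\la)$ has type $\nu$ with $\alp(\nu)\neq\alp(\la)$, hence different from the type of every $S\in\CT_0^\Bp(\la)$. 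Since $f_\la(x)$ lies in the $R$-span of $\{\vf_S:S\in\CT_0^\Bp(\la)\}$, pairing it against $\vf_T$ for $T\notin\CT_0^\Bp(\la)$ is automatically zero. Thus the condition $\lan f_\la(x),\vf_T\ran\in\wp^i$ for all $T\in\CT_0(\la)$ is \emph{equivalent} to the same condition restricted to $T\in\CT_0^\Bp(\la)$, and by the compatibility formula this is exactly $x\in Z_R^{(\la,0)}(i)$. The paper's proof is then a three-line chain of equivalences, with no asymmetry between the two inclusions; the passage to $F$ is immediate because $f_\la$ sends the basis $\{\vf_T^{(\la,0)}:T\in\CT_0^\Bp(\la)\}$ into the basis $\{\vf_T:T\in\CT_0(\la)\}$, so $f_\la(Z_R^{(\la,0)})$ is an $R$-direct summand of $W_R^\la$.

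Your proposed workaround for the forward inclusion, via $W^\la_R=f_\la(Z_R^{(\la,0)})\cdot\Sc_R$ and associativity, does not close as written. After moving $\vf$ across to get $\lan f_\la(x)\vf^\ast,f_\la(y)\ran$, the element $f_\la(x)\vf^\ast$ is a general element of $W^\la_R$, and you are back to the original problem of pairing an arbitrary element of $W^\la_R$ against $f_\la(Z_R^{(\la,0)})$. The appeal to ``the standardly-based structure of $\Sp_R$'' cannot help here, since $\vf\in\Sc_R$ rather than $\Sp_R$, so there is no mechanism to pull $\vf^\ast$ through $f_\la$ or to express $f_\la(x)\vf^\ast$ in terms of $\b_\la$-data for $x$. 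Without the orthogonality fact (or something equivalent), this circle does not break.
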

\begin{proof}
By definition, we see that $\b \big(\vf_T^{(\la,0)},\vf_S^{(\la,0)}\big)=\lan \vf_{S},\vf_T \ran$ for any $S,T \in \CT_0^\Bp(\la)$, 
and that $\lan \vf_S,\vf_T \ran =0 $ if $S\in \CT_0^\Bp(\la),\,\,T \in \CT_0(\la) \setminus \CT_0^\Bp(\la)$. 
Then for $x \in Z_R^{(\la,0)}$, we have
\begin{align*}
x \in Z_R^{(\la,0)}(i) &\LRa \b_\la \big(\vf_T^{(\la,0)},x \big) \in \wp^i \quad \text{for any }T\in \CT_0^\Bp(\la)\\
	&\LRa \lan f_\la(x) , \vf_T \ran \in \wp^i \quad \text{for any } T \in \CT_0(\la)\\
	&\LRa f_\la(x) \in W_R^\la(i)
\end{align*}

By taking the quotient, we obtain the lemma.  
\end{proof}

\begin{lem} \label{S-include}
Let $\la\in \vL^+$. For any $i\geq 0$, we have 
\[(\io_i \otimes \id_\Sc)\big(Z^{(\la,0)}(i)\otimes_{\Sp}\Sc\big)\subset W^\la(i)\]
under the isomorphism $Z^{(\la,0)}\otimes_{\Sp}\Sc\cong W^\la$.
\end{lem}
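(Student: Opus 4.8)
The plan is to track the bilinear form through the isomorphism $Z^{(\la,0)}\otimes_{\Sp}\Sc\cong W^\la$ and reduce the statement to the $\Sp$-level fact already established in Lemma \ref{Sp-include}. Recall that $W^\la_R\cong Z^{(\la,0)}_R\otimes_{\Sp_R}\Sc_R$ as $\Sc_R$-modules by \cite[Proposition 3.6]{SW}, and that under this identification a general element of $W^\la_R$ is an $R$-linear combination of elements of the form $x\vf$ with $x\in Z^{(\la,0)}_R$ (more precisely $x\in f_\la(Z^{(\la,0)}_R)$) and $\vf\in\Sc_R$. So it suffices to work over $R$ and then reduce modulo $\wp$: I will show that if $x\in Z^{(\la,0)}_R(i)$ then $f_\la(x)\vf\in W^\la_R(i)$ for every $\vf\in\Sc_R$, and conclude by passing to the quotient by $\wp W^\la_R$.

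First I would fix $x\in Z^{(\la,0)}_R(i)$, so that by Lemma \ref{Sp-include} (applied over $R$) we have $f_\la(x)\in W^\la_R(i)$, i.e. $\lan f_\la(x),y\ran\in\wp^i$ for all $y\in W^\la_R$. Now take any $\vf\in\Sc_R$ and any $y\in W^\la_R$. Using the associativity of the bilinear form on $W^\la_R$, equation (\ref{asso-bi-W}),
\[
\lan f_\la(x)\vf,\,y\ran=\lan f_\la(x),\,y\vf^\ast\ran\in\wp^i,
\]
since $y\vf^\ast\in W^\la_R$. Hence $f_\la(x)\vf\in W^\la_R(i)$. As $W^\la_R(i)$ is an $R$-submodule and, under the isomorphism of \cite[Proposition 3.6]{SW}, $Z^{(\la,0)}_R(i)\otimes_{\Sp_R}\Sc_R$ is spanned by such elements $f_\la(x)\vf$, we obtain
\[
(\io_i\otimes\id_{\Sc_R})\big(Z^{(\la,0)}_R(i)\otimes_{\Sp_R}\Sc_R\big)\subset W^\la_R(i).
\]
Reducing modulo $\wp$ (that is, applying $-\otimes_R F$, equivalently forming $(\,\cdot\,+\wp W^\la_R)/\wp W^\la_R$) and using that this operation is compatible with the tensor product $\otimes_{\Sp}\Sc$ and takes $W^\la_R(i)$ to $W^\la(i)$ by definition, gives the asserted inclusion over $F$.

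The step I expect to require the most care is the claim that $Z^{(\la,0)}_R(i)\otimes_{\Sp_R}\Sc_R$ is generated, as an $R$-module inside $W^\la_R$, by the elements $f_\la(x)\vf$ with $x\in Z^{(\la,0)}_R(i)$ — one must be sure the natural map $Z^{(\la,0)}_R(i)\otimes_{\Sp_R}\Sc_R\to Z^{(\la,0)}_R\otimes_{\Sp_R}\Sc_R$ really has image the $\Sc_R$-submodule generated by $f_\la\big(Z^{(\la,0)}_R(i)\big)$, which is exactly what the notation $(\io_i\otimes\id_{\Sc_R})$ encodes, so this is essentially bookkeeping once \cite[Proposition 3.6]{SW} is invoked. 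The only other subtlety is the interaction between reduction mod $\wp$ and the tensor product; since $\Sc_R$ is $R$-free and the relevant modules are finitely generated, base change commutes with $\otimes_{\Sp_R}\Sc_R$ and no $\mathrm{Tor}$ obstruction arises, so the passage to the quotient is harmless.
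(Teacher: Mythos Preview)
Your proof is correct and follows essentially the same route as the paper: work over $R$, use the $R$-level content of Lemma \ref{Sp-include} to get $f_\la(x)\in W_R^\la(i)$ for $x\in Z_R^{(\la,0)}(i)$, then use associativity (\ref{asso-bi-W}) to conclude $f_\la(x)\vf\in W_R^\la(i)$ for all $\vf\in\Sc_R$, and pass to the quotient. The paper phrases this via the cyclic generator, writing $x=\vf_{T^\la}^{(\la,0)}\p$ and using the explicit form $g_\la(x\otimes\vf)=\vf_{T^\la}\p\vf$ of the isomorphism from \cite[Proposition 3.6]{SW}, but this is only a notational difference from your $f_\la(x)\vf$; your worries about the image of $\io_i\otimes\id_{\Sc_R}$ and about Tor obstructions are harmless over-caution and are not needed here.
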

\begin{proof}
Recall that any element of $Z_R^{(\la,0)}$ can be written in the form $\vf_{T^\la}^{(\la,0)}\cdot \p$ with $\p\in \Sp_R$.
Moreover it follows from \cite[Proposition 3.6]{SW} that, 
under the isomorphism $g_\la:Z_R^{(\la,0)}\otimes_{\Sp}\Sc_R \xrightarrow{\sim}W_R^\la$, 
we have $g_\la\big(\vf_{T^\la}^{(\la,0)}\cdot \p \otimes \vf\big)=\vf_{T^\la}\cdot \p\vf$ for $\p\in \Sp_R,\,\vf\in \Sc_R$. 
This is true also for $Z^{(\la,0)}$ and $W^\la$. Thus in order to show the lemma, 
it is enough to prove the following.

\begin{flushleft}
(\thethm.1) Suppose that $\vf_{T^\la}^{(\la,0)}\p \in Z^{(\la,0)}(i)$ for $\p \in \Sp$. 
Then we have $\vf_{T^\la}\p\vf \in W^\la(i)$\\ \hspace{3em} for any $\vf\in \Sc$.
\end{flushleft}
Now take $ \vf_{T^\la}^{(\la,0)}\p \in Z_R^{(\la,0)}$.
If $\vf_{T^\la}^{(\la,0)}\cdot \p \in Z_R^{(\la,0)}(i)$, then $ \b_\la\big(x\,, \,\vf_{T^\la}^{(\la,0)}\p\big)\in \wp^i$ 
for any $ x \in \,^\diamondsuit Z_R^{(\la,0)}$. 
This implies that $\lan \vf_{T^\la}\p\,,\,y\ran \in \wp^i$ for any $ y \in W_R^\la$ by a similar argument as the proof of Lemma \ref{Sp-include}.

Since $\lan \vf_{T^\la}\p\vf \,,\, y\ran =\lan \vf_{T^\la}\p \,,\, y\vf^\ast\ran $ for any $y\in W_R^\la$ and any $\vf \in \Sc_R$, we see that 
$\vf_{T^\la}^{(\la,0)}\p\in Z_R^{(\la,0)}(i) $ implies that $\vf_{T^\la}\p\vf \in W_R^\la(i)$. 
By taking the quotient, we obtain (\thethm.1). Thus the lemma is proved.
\end{proof}

These two lemmas imply the following proposition about the relation between $d_{\la\mu}^{(\la,0)}(v)$ and $d_{\la\mu}(v)$.

\begin{prop} \label{vd-sp-s}
Let $\la, \mu \in \vL^+ $ be such that $\alp(\la)=\alp(\mu)$. Then for any $i\geq 0$, we have 
\[\big[ Z^{(\la,0)}(i)\big/ Z^{(\la,0)}(i+1) : L^{(\mu,0)}\big]=\big[ W^\la(i)\big/ W^\la(i+1) : L^\mu \big].\]
Hence we have $d_{\la\mu}^{(\la,0)}(v)=d_{\la\mu}(v)$ if $\alp(\la)=\alp(\mu)$.
\end{prop}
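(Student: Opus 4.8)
The plan is to exploit the comparison between the Jantzen filtrations of $Z^{(\la,0)}$ and $W^\la$ established by Lemmas~\ref{Sp-include} and \ref{S-include}, together with the fact that $Z^{(\la,0)}\otimes_{\Sp}\Sc\cong W^\la$ and the control on composition factors provided by Theorem~\ref{rel-d} (and \cite[Proposition 3.3]{SW}). First I would record the two-sided containment: by Lemma~\ref{S-include} we have $(\io_i\otimes\id_\Sc)\big(Z^{(\la,0)}(i)\otimes_\Sp\Sc\big)\subset W^\la(i)$, and conversely, tensoring the inclusion $f_\la^{-1}(W^\la(i))=Z^{(\la,0)}(i)$ of Lemma~\ref{Sp-include} with $\Sc$ over $\Sp$ and using right-exactness of $-\otimes_\Sp\Sc$, the image of $Z^{(\la,0)}(i)\otimes_\Sp\Sc$ in $W^\la$ is contained in $W^\la(i)$ and contains the image of $f_\la$ restricted to the relevant submodule. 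The goal is to show these two $\Sc$-submodules of $W^\la$ actually coincide, i.e. that the natural map $Z^{(\la,0)}(i)\otimes_\Sp\Sc\to W^\la(i)$ is surjective (injectivity is automatic since $Z^{(\la,0)}(i)$ is a direct summand as $\Sp$-module of $Z^{(\la,0)}$ flatly, or at least because the induced map sits inside the isomorphism $Z^{(\la,0)}\otimes_\Sp\Sc\cong W^\la$).

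The key step is a counting/dimension argument at the level of composition multiplicities. Since every composition factor of $Z^{(\la,0)}$ is of the form $L^{(\mu,0)}$ with $\alp(\mu)=\alp(\la)$, and since inducing up, $L^{(\mu,0)}\otimes_\Sp\Sc$, has a filtration whose factors are controlled so that, after passing to the Grothendieck group, $[\,Z^{(\la,0)}(i)\otimes_\Sp\Sc\,] = \sum_\mu [Z^{(\la,0)}(i):L^{(\mu,0)}]\,[L^{(\mu,0)}\otimes_\Sp\Sc]$, I would combine this with the analogous identity for $W^\la(i)$ and use Theorem~\ref{rel-d}'s equality $d^{(\la,0)}_{\la\mu}=d_{\la\mu}$ (applied not to $Z^{(\la,0)}$ and $W^\la$ themselves but to the submodules in the filtration, via the snake-lemma-type comparison of quotients) to conclude that the inclusion $(\io_i\otimes\id_\Sc)(Z^{(\la,0)}(i)\otimes_\Sp\Sc)\subset W^\la(i)$ is an equality by a dimension count over $F$. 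Concretely: $\dim_F W^\la(i) = \dim_F\big(Z^{(\la,0)}(i)\otimes_\Sp\Sc\big)$ because both sides compute the same alternating sum, using that $\Sc$ is free of the same rank as a left $\Sp$-module on the relevant pieces (this is where I would invoke the standardly-based structure of $\Sp$ from \cite[Theorem 2.6]{SW} and the explicit basis $\ZC^\Bp$).

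Once the equality of submodules $(\io_i\otimes\id_\Sc)(Z^{(\la,0)}(i)\otimes_\Sp\Sc)= W^\la(i)$ is in hand for all $i$, it follows that $W^\la(i)/W^\la(i+1) \cong \big(Z^{(\la,0)}(i)/Z^{(\la,0)}(i+1)\big)\otimes_\Sp\Sc$ as $\Sc$-modules. Then I would compute $[\,W^\la(i)/W^\la(i+1):L^\mu\,]$ by decomposing the right-hand side: every composition factor of $Z^{(\la,0)}(i)/Z^{(\la,0)}(i+1)$ is $L^{(\nu,0)}$ with $\alp(\nu)=\alp(\la)=\alp(\mu)$, and the multiplicity of $L^\mu$ in $L^{(\nu,0)}\otimes_\Sp\Sc$ is $\delta_{\nu\mu}$ (this is the content of $Z^{(\la,0)}\otimes_\Sp\Sc\cong W^\la$ being compatible with the labelling, again from \cite[Proposition 3.6]{SW} and Theorem~\ref{rel-d}, which forces $L^{(\mu,0)}\otimes_\Sp\Sc$ to have $L^\mu$ with multiplicity one as its unique factor in the appropriate $\alp$-block). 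Hence $[\,W^\la(i)/W^\la(i+1):L^\mu\,] = [\,Z^{(\la,0)}(i)/Z^{(\la,0)}(i+1):L^{(\mu,0)}\,]$, which is the claimed equality; summing against $v^i$ gives $d_{\la\mu}^{(\la,0)}(v)=d_{\la\mu}(v)$.

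The main obstacle I anticipate is justifying the surjectivity of $Z^{(\la,0)}(i)\otimes_\Sp\Sc\to W^\la(i)$ — equivalently, ruling out that $W^\la(i)$ is strictly larger than the induced submodule. The containment in one direction is Lemma~\ref{S-include}; the reverse requires knowing that $-\otimes_\Sp\Sc$ does not lose anything on the filtration pieces, which is not formal since $\Sc$ need not be flat over $\Sp$ in general. I expect this to be handled by the dimension count sketched above, but making that count rigorous — i.e. verifying that over the discrete valuation ring $R$ the modules $Z_R^{(\la,0)}(i)\otimes_{\Sp_R}\Sc_R$ are $R$-free of the expected rank and that reduction mod $\wp$ commutes with the relevant operations — is the delicate part, and it is where one must lean most heavily on the explicit standard basis and the structure results of \cite{SW}.
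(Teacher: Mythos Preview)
Your proposal hinges on proving that $(\io_i\otimes\id_\Sc)\big(Z^{(\la,0)}(i)\otimes_\Sp\Sc\big)=W^\la(i)$, and you correctly identify this as the main obstacle. The paper in fact \emph{explicitly declines} to prove this equality: immediately after establishing the inequality $[Z^{(\la,0)}(i):L^{(\mu,0)}]\leq[W^\la(i):L^\mu]$ from Lemma~\ref{S-include}, it remarks that one ``can not see whether $(\io_{i+1}\otimes\id_\Sc)(Z^{(\la,0)}(i+1)\otimes_\Sp\Sc)=W^\la(i+1)$ or not''. So the route you sketch is precisely the one the author avoids.

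Your proposed workaround, a dimension count, is circular as stated. To compute $\dim_F\big(Z^{(\la,0)}(i)\otimes_\Sp\Sc\big)$ via a composition series you would need to know $\dim_F\big(L^{(\nu,0)}\otimes_\Sp\Sc\big)$, or equivalently the multiplicities $[L^{(\nu,0)}\otimes_\Sp\Sc:L^\mu]$. Your claim that this multiplicity is $\delta_{\nu\mu}$ does not follow from $Z^{(\nu,0)}\otimes_\Sp\Sc\cong W^\nu$ alone (inducing a simple need not give a simple), nor from Theorem~\ref{rel-d}, which concerns only the total multiplicities in $Z^{(\la,0)}$ and $W^\la$, not in filtration pieces or induced simples. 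And $\Sc$ is not known to be flat (or free of a uniform rank) over $\Sp$, so there is no shortcut through module ranks either.

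The paper's argument is instead a two-inequality backward induction. From Lemma~\ref{Sp-include} one obtains (via \cite[Proposition 3.12]{SW}) the quotient inequality
\[
\big[Z^{(\la,0)}(i)/Z^{(\la,0)}(i+1):L^{(\mu,0)}\big]\ \geq\ \big[W^\la(i)/W^\la(i+1):L^\mu\big],
\]
while from Lemma~\ref{S-include} one gets (via \cite[Proposition 3.11]{SW}) the submodule inequality
\[
\big[Z^{(\la,0)}(i):L^{(\mu,0)}\big]\ \leq\ \big[W^\la(i):L^\mu\big].
\]
Using Lemma~\ref{S-include} and \cite[Lemma~3.8(ii)]{SW} one shows the $Z$-filtration is no longer than the $W$-filtration, and moreover that $L^\mu$ cannot occur in $W^\la(i)$ beyond the last nonzero $Z^{(\la,0)}(l)$. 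This gives equality at the bottom layer $i=l$; then writing $[\,\cdot\,(i-1)/\,\cdot\,(i)]=[\,\cdot\,(i-1)]-[\,\cdot\,(i)]$ and combining the two inequalities propagates equality upward by backward induction on $i$. No surjectivity of the induced filtration, no flatness, and no analysis of $L^{(\nu,0)}\otimes_\Sp\Sc$ is needed.
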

\begin{proof}
Fix $\la, \mu \in \vL^+$ such that $\alp(\la)=\alp(\mu)$, and an integer $i\geq 0$.
Thanks to Lemma \ref{Sp-include}, we have the following result by similar arguments as in the proof of \cite[Proposition 3.12]{SW}.
\begin{equation} \label{geq}
\big[Z^{(\la,0)}(i)\big/ Z^{(\la,0)}(i+1):L^{(\mu,0)}\big] \geq [W^\la(i)\big/ W^\la(i+1):L^\mu \big].
\end{equation}

Conversly, thanks to Lemma \ref{S-include}, we have the following result by similar arguments as in the proof of \cite[Proposition 3.11]{SW}. 
\begin{equation} \label{leq}
\big[Z^{(\la,0)}(i):L^{(\mu,0)}\big] \leq \big[W^\la(i):L^\mu\big].
\end{equation}

We remark that this does not imply dilectry 
\[\big[Z^{(\la,0)}(i)\big/Z^{(\la,0)}(i+1):L^{(\mu,0)}\big] \leq \big[W^\la(i)\big/W^\la(i+1):L^\mu\big] \] 
since we can not see whether $(\io_{i+1}\otimes \id_\Sc)\big(Z^{(\la,0)}(i+1)\otimes_{\Sp}\Sc\big)=W^\la(i+1)$ or not. 
Instead, we argue as follows. Let 
\[W^\la=W^\la(0)\supset W^\la(1)\supset \cdots \supset W^\la(k)\supsetneqq W^\la(k+1)=0\]
\[Z^{(\la,0)}=Z^{(\la,0)}(0)\supset Z^{(\la,0)}(1) \supset \cdots \supset Z^{(\la,0)}(l) \supsetneqq Z^{(\la,0)}(l+1)=0\]
be the Jantzen filtrations of $W^\la$ and $Z^{(\la,0)}$ respectively. 
Then we have 
\[(\io_{k+1}\otimes \id_\Sc )\big(Z^{(\la,0)}(k+1)\otimes_{\Sp}\Sc\big) \subset W^\la(k+1)=0\]
 by Lemma \ref{S-include}. 
This implies that $Z^{(\la,0)}(k+1)=0$ 
since $(\io\otimes \id_\Sc)(M\otimes_\Sp \Sc)\not=0$ for any non-zero submodule $M$ of $Z^{(\la,0)}$ and the inclusion map 
$\iota: M\hookrightarrow Z^{(\la,0)}$ by \cite[Lemma 3.8 (\roii)]{SW}.
So we have $l\leqq k$. 

Now, if $L^\mu$ is a composition factor of $W^\la(i)\big/W^\la(i+1)$, then we have 
\[1\leq \big[W^\la(i)\big/ W^\la(i+1) : L^\mu \big] \leq \big[ Z^{(\la,0)}(i)\big/Z^{(\la,0)}(i+1) : L^{(\mu,0)}\big]\]
by (\ref{geq}). Hence, we have $Z^{(\la,0)}(i)\not=0$. 
This implies that $i\leqq l$ if $L^\mu $ is a composition factor of $W^\la(i)$.
In particular, $\big[W^\la(l+1) : L^\mu \big]=0$. Thus we have 
\[ \big[Z^{(\la,0)}(l)\big/ Z^{(\la,0)}(l+1) : L^{(\mu,0)}\big] = \big[ Z^{(\la,0)}(l):L^{(\mu,0)}\big] \]
\[ \big[W^\la(l)\big/ W^\la(l+1) : L^\mu \big]=\big[W^\la(l):L^\mu\big].\]
Combining these equalities with (\ref{geq}) and (\ref{leq}), we have 
\[\big[ Z^{(\la,0)}(l)\big/ Z^{(\la,0)}(l+1) : L^{(\mu,0)}\big] =\big[ W^\la(l)\big/ W^\la(l+1) : L^\mu \big],\]
and so 
\begin{equation} \label{i=l}
\big[ Z^{(\la,0)}(l) : L^{(\mu,0)}\big] =\big[ W^\la(l) : L^\mu \big].
\end{equation}

Next we consider the case where $i=l-1$. Note that
\[ \big[Z^{(\la,0)}(l-1)\big/ Z^{(\la,0)}(l) : L^{(\mu,0)}\big] 
	=\big[Z^{(\la,0)}(l-1) : L^{(\mu,0)}\big] -\big[Z^{(\la,0)}(l) : L^{(\mu,0)}\big] ,\]
\[\big[W^\la(l-1)\big/ W^\la(l) : L^\mu \big]=\big[W^\la(l-1) : L^\mu \big]-\big[W^\la(l) : L^\mu \big]. \]
Combined with (\ref{geq}), (\ref{leq}) and (\ref{i=l}), we have
\[\big[ Z^{(\la,0)}(l-1)\big/ Z^{(\la,0)}(l) : L^{(\mu,0)}\big] =\big[ W^\la(l-1)\big/ W^\la(l) : L^\mu \big]\]
and so 
$\big[ Z^{(\la,0)}(l-1) : L^{(\mu,0)}\big] =\big[ W^\la(l-1) : L^\mu \big]$. 
Therefore by backward induction on $l$, we obtain the proposition.
\end{proof}

Combining Proposition \ref{vd-bar-p} and Proposition \ref{vd-sp-s}, we have the following theorem.

\begin{thm} \label{th-vdecom}
For any $\la,\mu \in \vL^+$ such that $\alp(\la)=\alp(\mu)$, we have
\[\ol{d}_{\la\mu}(v)=d_{\la\mu}^{(\la,0)}(v)=d_{\la\mu}(v).\]
\end{thm}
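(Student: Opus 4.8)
The plan is to obtain Theorem \ref{th-vdecom} simply by chaining the two preceding propositions. By Proposition \ref{vd-bar-p}(\ref{q2}), which is valid for all $\la,\mu\in\vL$ (with no hypothesis on $\alp$), we already have $\ol{d}_{\la\mu}(v)=d_{\la\mu}^{(\la,0)}(v)$. It therefore remains to identify $d_{\la\mu}^{(\la,0)}(v)$ with $d_{\la\mu}(v)$, and this is exactly the content of Proposition \ref{vd-sp-s} under the standing assumption $\alp(\la)=\alp(\mu)$. So the proof is the two-line deduction
\[\ol{d}_{\la\mu}(v)\;\overset{\ref{vd-bar-p}}{=}\;d_{\la\mu}^{(\la,0)}(v)\;\overset{\ref{vd-sp-s}}{=}\;d_{\la\mu}(v),\]
valid precisely because $\alp(\la)=\alp(\mu)$ licenses the use of Proposition \ref{vd-sp-s}.

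First I would note that the real work has already been done in establishing the two propositions, so there is essentially no obstacle left at this stage: one only needs to check that the hypotheses match up. Proposition \ref{vd-sp-s} requires $\alp(\la)=\alp(\mu)$, which is assumed in the theorem; Proposition \ref{vd-bar-p}(\ref{q2}) requires nothing. In particular, one should remark that the coefficientwise identities
\[\big[\ol{Z}^\la(i)/\ol{Z}^\la(i+1):\ol{L}^\mu\big]=\big[Z^{(\la,0)}(i)/Z^{(\la,0)}(i+1):L^{(\mu,0)}\big]=\big[W^\la(i)/W^\la(i+1):L^\mu\big]\]
hold for every $i\geq 0$, which is strictly stronger than the stated polynomial identity and follows by combining the degree-$i$ assertions of Propositions \ref{vd-bar-p}(\ref{q2}) and \ref{vd-sp-s}. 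Multiplying by $v^i$ and summing over $i$ then yields the theorem.

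If anything deserves a sentence of justification, it is the bookkeeping with Jantzen filtrations being finite sequences, so that the sums defining $\ol{d}_{\la\mu}(v)$, $d_{\la\mu}^{(\la,0)}(v)$ and $d_{\la\mu}(v)$ are finite; but this was already recorded in \S\ref{def-vdecom}. Thus the only genuinely load-bearing input is Proposition \ref{vd-sp-s}, whose proof (via Lemmas \ref{Sp-include} and \ref{S-include} and the backward induction comparing the two filtrations term by term) is where the difficulty truly lies — and that has been handled above. The theorem itself is then immediate.
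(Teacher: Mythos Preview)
Your proposal is correct and matches the paper's approach exactly: the paper simply states that Theorem \ref{th-vdecom} follows by combining Proposition \ref{vd-bar-p} and Proposition \ref{vd-sp-s}, which is precisely what you do.
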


If we specialize $v=1$, the theorem reduceds to Theorem \ref{rel-d}.
\para  \label{basis-W(i)}
For later use, we shall consider the basis of $W_R^\la(i)$, following the computation in the proof of \cite[Lemma 5.30]{M-book}. 
Let $\pi$ be the generator of $\wp$, namely $\wp=(\pi)$, and $\nu_\wp$ be the valuation map on $R$. 
Let $G^\la=\big( \lan \vf_S, \vf_T \ran\big)_{S,T\in \CT_0(\la)}$ be the Gram matrix of $W_R^\la$. 
Since $R$ is a PID, there exist $P,Q \in \GL_N(R)$ (where $N=|\CT_0(\la)|$) such that 
$P G^\la Q=\diag (d_{S_1},d_{S_2},\cdots, d_{S_N})$, where $d_{S_k}\in R$ and $\{S_1,\cdots ,S_N\}=\CT_0(\la)$.
Let $ P=\big(p_{ST}\big)_{S,T\in \CT_0(\la)},\,\, Q=\big(q_{ST}\big)_{S,T\in \CT_0(\la)}$ 
and we define, for $S,T\in \CT_0(\la)$,
\[f_S=\sum_{S^\prime\in \CT_0(\la)}p_{SS^\prime}\vf_{S^\prime}, \qquad 
	g_T=\sum_{T^\prime\in \CT_0(\la)}q_{T^\prime T}\vf_{T^\prime}.\]
Since both $P$ and $Q$ are regular matrices, $\{f_S\,|\,S\in \CT_0(\la) \}$ and $\{g_T\,|\, T\in \CT_0(\la)\}$ 
are basis of $W_R^\la$ respectively. 
Moreover we have $\diag(d_{S_1},\cdots ,d_{S_N})=P G^\la Q =\big(\lan f_S , g_T \ran \big)_{S,T\in \CT_0(\la)}$ 
by definition. Thus we have
\begin{equation} \label{orthogonal}
\lan f_S, g_T \ran =\d_{ST}d_S \quad \big(S,T\in \CT_0(\la) \big)
\end{equation}
where $\d_{ST}=1$ if $S=T$ and $\d_{ST}=0$ otherwise. 
For $x=\sum_{S\in \CT_0(\la)} r_S f_S  \in W_R^\la \quad(r_S\in R)$, we have 
\begin{align*}
x\in W_R^\la(i) &\LRa \lan x, g_T \ran \in \wp^i \qquad \text{for any } T\in \CT_0(\la) \\
	&\LRa r_Td_T \in \wp^i  \qquad \text{for any } T\in \CT_0(\la) \quad (\text{by }(\ref{orthogonal})) \\
	&\LRa \nu_\wp(r_Td_T)=\nu_\wp(r_T)+\nu_\wp(d_T) \geq i \qquad \text{for any }T\in \CT_0(\la).
\end{align*}
It follows from this that $W_R^\la(i)$ is a free $R$-module with basis
\begin{equation} \label{W(i)-basis}
\big\{f_T \bigm| T\in \CT_0(\la),\,\nu_\wp(d_T)\geq i \big\}\cup 
	\big\{\pi^{i-\nu_\wp(d_T)}f_T \bigm| T\in \CT_0(\la),\,\nu_\wp(d_T)<i \big\}.
\end{equation}

\para
We consider the Jantzen filtration of $W^{\la{[k]}}$ ($1\leq k \leq g$) as in the case of $W^\la$ 
and use the notation similar to the case of $W^\la$. 
Since we see that $W_R^{\la^{[k]}}(i_k)$ \,($i_k \geq 0$) is a free $R$-module (see \ref{basis-W(i)}.), 
$W_R^{\la^{[1]}}(i_1)\otimes \cdots \otimes W_R^{\la^{[g]}}(i_g)$ \,($(i_1,\cdots ,i_g) \in Z_{\geq0}^g$) 
becomes the submodule of $W_R^{\la^{[1]}}\otimes \cdots \otimes W_R^{\la^{[g]}}$.

For $1\leq k \leq g$, 
let $\big\{f_{S^{[k]}} \,|\, S^{[k]}\in \CT_0(\la^{[k]}) \big\}$ and $\big\{g_{T^{[k]}}\,|\, T^{[k]}\in \CT_0(\la^{[k]}) \big\}$ 
be the bases of $W_R^{\la^{[k]}}$ as of $W_R^\la$ in \ref{basis-W(i)}. 
For $S,T\in \CT_0^\Bp(\la)$, we define $\ol{f}_S:=f_{S^{[1]}}\otimes \cdots \otimes f_{S^{[g]}}$ and 
$\ol{g}_T:=g_{T^{[1]}}\otimes \cdots \otimes g_{T^{[g]}}$. 
Then $\big\{ \ol{f}_S \bigm| S\in \CT_0^\Bp(\la) \big\}$ and $\big\{ \ol{g}_T \bigm| T\in \CT_0^\Bp(\la) \big\}$ 
turn out to be the bases of $\ol{Z}_R^\la$. 
By Lemma \ref{bi-decom} and (\ref{orthogonal}), we have 
\begin{equation}
\lan \ol{f}_S, \ol{g}_T \ran_{\Bp}=\d_{ST}d_{T^{[1]}}\cdots d_{T^{[g]}} \qquad \text{for } S,T\in \CT_0^\Bp(\la) .
\end{equation}
We set $d_T=d_{T^{[1]}}\cdots d_{T^{[g]}}$. Then we have the following result by a similar argument as in \ref{basis-W(i)}. 
$\ol{Z}_R^\la(i)$ is a free $R$-module with basis 
\begin{equation}
\big\{ \ol{f}_T \bigm| T\in \CT_0^\Bp(\la),\,\nu_\wp(d_T)\geq i \big\} \cup 
	\big\{ \pi^{i-\nu_\wp(d_T)} \ol{f}_T \bigm| T\in \CT_0^\Bp(\la),\,\nu_\wp(d_T) <i \big\}.
\end{equation}  
Recall that $\D_{i,g}$ is the set of $(i_1,\cdots ,i_g)\in \ZZ_{\geq 0}^g$ such that $i_1+\cdots +i_g=i$. 
Then we have the following proposition.

\begin{prop} \label{i-decom-prop}
Let $\la\in \vL^+$ and $i \geq 0$. Under the isomorphism $\ol{Z}^\la_R\cong W_R^{\la^{[1]}}\otimes \cdots \otimes W_R^{\la^{[g]}}$, 
we have 
\begin{equation} \label{i-decom}
 \ol{Z}_R^\la(i) = \sum_{(i_1,\cdots ,i_g) \in \D_{i,g}} W_R^{\la^{[1]}}(i_1)\otimes \cdots \otimes W_R^{\la^{[g]}}(i_g)
\end{equation}
\end{prop}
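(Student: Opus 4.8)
The plan is to use the explicit $R$-bases for the Jantzen layers that were set up in the two paragraphs preceding the proposition. Recall we have bases $\{\ol f_T\}$ and $\{\ol g_T\}$ of $\ol Z_R^\la$, with $\ol f_T = f_{T^{[1]}}\otimes\cdots\otimes f_{T^{[g]}}$ and similarly for $\ol g_T$, satisfying the orthogonality relation $\lan \ol f_S,\ol g_T\ran_\Bp = \d_{ST}\, d_{T^{[1]}}\cdots d_{T^{[g]}}$, where the factors $d_{T^{[k]}}\in R$ come from the Smith normal form of the Gram matrix of $W_R^{\la^{[k]}}$. Setting $d_T = d_{T^{[1]}}\cdots d_{T^{[g]}}$, we have $\nu_\wp(d_T) = \sum_{k=1}^g \nu_\wp(d_{T^{[k]}})$. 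The displayed basis of $\ol Z_R^\la(i)$ just after that relation then reads
\[
\big\{\ol f_T \bigm| \nu_\wp(d_T)\geq i\big\}\cup \big\{\pi^{i-\nu_\wp(d_T)}\ol f_T \bigm| \nu_\wp(d_T)<i\big\},
\]
i.e.\ $\ol Z_R^\la(i) = \bigoplus_{T\in\CT_0^\Bp(\la)} \pi^{\max(0,\,i-\nu_\wp(d_T))} R\,\ol f_T$.

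The core step is therefore a purely combinatorial identity about this direct sum of rank-one submodules. For the right-hand side of \eqref{i-decom}, each tensor factor $W_R^{\la^{[k]}}(i_k)$ has basis as in \eqref{W(i)-basis}, so $W_R^{\la^{[1]}}(i_1)\otimes\cdots\otimes W_R^{\la^{[g]}}(i_g) = \bigoplus_{T} \pi^{\sum_k \max(0,\,i_k-\nu_\wp(d_{T^{[k]}}))}\,R\,\ol f_T$, the sum running over $T$ with $T^{[k]}\in\CT_0(\la^{[k]})$, which under the bijection $\CT_0^\Bp(\la)\simeq \prod_k\CT_0(\la^{[k]})$ is exactly $\CT_0^\Bp(\la)$. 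Summing over $(i_1,\dots,i_g)\in\D_{i,g}$, the coefficient of $R\,\ol f_T$ on the right-hand side is $\pi$ to the power
\[
\min_{(i_1,\dots,i_g)\in\D_{i,g}} \ \sum_{k=1}^g \max\big(0,\, i_k-\nu_\wp(d_{T^{[k]}})\big).
\]
So the whole proposition reduces to checking, for each fixed $T$, that this minimum equals $\max(0,\, i - \sum_k \nu_\wp(d_{T^{[k]}}))$. Writing $e_k=\nu_\wp(d_{T^{[k]}})\ge 0$, this is the elementary claim: for non-negative integers with $\sum_k i_k = i$, one has $\min \sum_k (i_k-e_k)^+ = (i-\sum_k e_k)^+$. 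This is immediate — if $i\le\sum e_k$ one can distribute the $i_k$ so that each $i_k\le e_k$, giving $0$; if $i>\sum e_k$, then $\sum_k(i_k-e_k)^+ \ge \sum_k(i_k-e_k) = i-\sum e_k$ always, with equality when every $i_k\ge e_k$, which is achievable.

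There is one subtlety to address: \eqref{i-decom} is stated as an ordinary sum of submodules, not a direct sum, so strictly speaking I should argue that the sum of the rank-one pieces $\pi^{\sum_k(i_k-e_k)^+}R\,\ol f_T$ over all $(i_1,\dots,i_g)\in\D_{i,g}$, for fixed $T$, equals the single piece $\pi^{(i-\sum e_k)^+}R\,\ol f_T$; but since all these are submodules of the same rank-one free module $R\,\ol f_T$, their sum is just the one with the smallest exponent, which is precisely the minimum computed above. Then summing over $T$ and invoking the decomposition of $\ol f_T$'s into bases of the tensor factors completes the identification. I do not anticipate a genuine obstacle here; the only thing to be careful about is bookkeeping the identification $\CT_0^\Bp(\la)\simeq\prod_k\CT_0(\la^{[k]})$ consistently and noting, as in the remark after Lemma \ref{bi-decom}, that none of this requires $R$ to be a field — only that $R$ is a PID (in fact a DVR), which is in force throughout Section \ref{def-vdecom}.
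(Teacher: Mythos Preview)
Your proposal is correct and follows essentially the same approach as the paper: both arguments rest on the explicit $R$-basis $\{\ol f_T\}$ of $\ol Z_R^\la(i)$ coming from the Smith normal form, and the key step in either case is to produce, for each $T$, an appropriate $(i_1,\dots,i_g)\in\D_{i,g}$ placing the relevant basis vector in some $W_R^{\la^{[1]}}(i_1)\otimes\cdots\otimes W_R^{\la^{[g]}}(i_g)$. The only cosmetic difference is that the paper handles the inclusion $\supseteq$ directly via Lemma~\ref{bi-decom} and the bilinear form, whereas you fold both inclusions into the single elementary identity $\min_{\sum i_k=i}\sum_k(i_k-e_k)^+=(i-\sum_k e_k)^+$; this is a slightly tidier packaging of the same computation.
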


\begin{proof}
First we show that the right hand side is contained in the left hand side. 
Take $x=x^{[1]}\otimes \cdots \otimes x^{[g]} \in W_R^{\la^{[1]}}(i_1) \otimes \cdots \otimes W_R^{\la^{[g]}}(i_g)$ such that $i_1+\cdots +i_g=i$. 
By Lemma \ref{bi-decom}, we have \vspace{-1em}
\begin{align*}
\lan x , \ol{\vf}_T \ran_{\Bp}&=\lan x^{[1]}, \vf_{T^{[1]}}\ran \cdots \lan x^{[g]}, \vf_{T^{[g]}} \ran \\
& \in \wp^{i_1}\cdots \wp^{i_g}=\wp^i \qquad \text{for any }T\in \CT_0^\Bp(\la).
\end{align*}
Thus we have $x \in \ol{Z}_R^\la(i)$. 

Then in order to show the equality, we have only to show that the basis element of 
$\ol{Z}_R^{\la}(i)$ is contained in the right hand side of (\ref{i-decom}).
First, we consider  $\ol{f}_T$ such that $\nu_\wp(d_T)\geq i$.
Since $\nu_\wp(d_T)=\nu_\wp(d_{T^{[1]}})+\cdots +\nu_\wp(d_{T^{[g]}})$,  
one can find $(i_1,\cdots ,i_g)\in \ZZ_{\geq 0}^g $ such that 
	$i_1+ \cdots +i_g=i$ and that $\nu_{\wp}(d_{T^{[k]}})\geq i_k $ for $k=1,\cdots, g$. 
Then 
\[\ol{f}_T=f_{T^{[1]}}\otimes \cdots \otimes f_{T^{[g]}} \in W_R^{\la^{[1]}}(i_1)\otimes \cdots \otimes W_R^{\la^{[g]}}(i_g), \] 
 and so $\ol{f}_T$ is contained in the right hand side of (\ref{i-decom}). 

Next we consider $\ol{f}_T$ such that $\nu_\wp(d_T) <i$. 
Then one can find $(i_1,\cdots ,i_g)$ such that 
	$i_1+\cdots +i_g=i$ and that $\nu_{\wp}(d_{T^{[k]}})\leq i_k$ for $k=1,\cdots ,g$. 
Therefore $\pi^{i-\nu_\wp(d_T)}\ol{f}_T= \big(\pi^{i_1-\nu_\wp(d_{T^{[1]}})}f_{T^{[1]}}\big) \otimes \cdots \otimes 
	\big(\pi^{i_g-\nu_\wp(d_{T^{[g]}})}f_{T^{[g]}}\big) $ is also contained in the right hand side of (\ref{i-decom}). 
The proposition is proved.
\end{proof}
We have the following corollary. 

\begin{cor} \label{cor-i-decom}
For $\la\in \vL^+$ and $i \geq 0$, under the isomorphism $\ol{Z}^\la\cong W^{\la^{[1]}}\otimes \cdots \otimes W^{\la^{[g]}}$, 
we have 
\[ \ol{Z}^\la(i) = \sum_{(i_1,\cdots ,i_g) \in \D_{i,g}} W^{\la^{[1]}}(i_1)\otimes \cdots \otimes W^{\la^{[g]}}(i_g) \]
\end{cor}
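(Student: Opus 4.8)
The plan is to deduce Corollary \ref{cor-i-decom} from Proposition \ref{i-decom-prop} by reducing everything modulo $\wp$. Recall that by definition $\ol{Z}^\la(i) = \big(\ol{Z}_R^\la(i) + \wp\,\ol{Z}_R^\la\big)/\wp\,\ol{Z}_R^\la$ and, under the isomorphism $\ol{Z}_R^\la \cong W_R^{\la^{[1]}}\otimes\cdots\otimes W_R^{\la^{[g]}}$, Proposition \ref{i-decom-prop} gives $\ol{Z}_R^\la(i) = \sum_{(i_1,\dots,i_g)\in\D_{i,g}} W_R^{\la^{[1]}}(i_1)\otimes\cdots\otimes W_R^{\la^{[g]}}(i_g)$. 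So I would first reduce this sum modulo $\wp$ and then identify the image with $\sum_{(i_1,\dots,i_g)\in\D_{i,g}} W^{\la^{[1]}}(i_1)\otimes\cdots\otimes W^{\la^{[g]}}(i_g)$, where each $W^{\la^{[k]}}(i_k) = \big(W_R^{\la^{[k]}}(i_k) + \wp\,W_R^{\la^{[k]}}\big)/\wp\,W_R^{\la^{[k]}}$ by definition.

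First I would note the general fact that reduction modulo $\wp$ commutes with finite sums of submodules: if $M = \sum_j M_j$ inside a free $R$-module $W_R$, then the image of $M$ in $W := W_R/\wp W_R$ equals $\sum_j \bar{M}_j$, where $\bar{M}_j$ is the image of $M_j$. Applying this with $M_j = W_R^{\la^{[1]}}(i_1)\otimes\cdots\otimes W_R^{\la^{[g]}}(i_g)$ reduces the problem to showing that the image of $W_R^{\la^{[1]}}(i_1)\otimes\cdots\otimes W_R^{\la^{[g]}}(i_g)$ in $W^{\la^{[1]}}\otimes\cdots\otimes W^{\la^{[g]}}$ is exactly $W^{\la^{[1]}}(i_1)\otimes\cdots\otimes W^{\la^{[g]}}(i_g)$. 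This is where freeness matters: since each $W_R^{\la^{[k]}}(i_k)$ is a free $R$-module and a direct summand of the free module $W_R^{\la^{[k]}}$ with free quotient (visible from the explicit bases in \ref{basis-W(i)}, which are adapted to a diagonalizing basis), tensoring over $R$ and reducing modulo $\wp$ behaves well; concretely one can track the explicit basis elements $\ol f_T$ and $\pi^{i_k - \nu_\wp(d_{T^{[k]}})} f_{T^{[k]}}$ and see that their images span the claimed subspace, using that $W^{\la^{[k]}}(i_k)$ has as basis precisely the images of the basis in \eqref{W(i)-basis}. Alternatively, and perhaps more cleanly, one observes that $W^{\la^{[k]}}(i_k) = \operatorname{Im}\big(W_R^{\la^{[k]}}(i_k) \to W^{\la^{[k]}}\big)$ and that $\otimes_R$ is right exact, so the image of $A_1\otimes_R\cdots\otimes_R A_g$ under the surjection onto $W^{\la^{[1]}}\otimes_F\cdots\otimes_F W^{\la^{[g]}}$ is the product of the images $\bar A_1\otimes_F\cdots\otimes_F\bar A_g$.

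The main obstacle I anticipate is the bookkeeping around tensor products over $R$ versus over $F$: one must be careful that $W_R^{\la^{[1]}}\otimes_R\cdots\otimes_R W_R^{\la^{[g]}}$ reduces modulo $\wp$ to $W^{\la^{[1]}}\otimes_F\cdots\otimes_F W^{\la^{[g]}}$ (true because each factor is $R$-free, so $\big(\bigotimes_R W_R^{\la^{[k]}}\big)/\wp \cong \bigotimes_F (W_R^{\la^{[k]}}/\wp)$), and that the submodule $W_R^{\la^{[1]}}(i_1)\otimes\cdots$ is genuinely the $R$-span of the pure tensors, so that its image is the $F$-span of the images of those pure tensors, i.e.\ the external tensor product of the reduced submodules. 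Once this is in place, the displayed equality of the corollary follows immediately by combining the two reductions: the image of $\ol{Z}_R^\la(i)$ is on one hand $\ol{Z}^\la(i)$ (by the definition of the Jantzen filtration of $\ol Z^\la$) and on the other hand $\sum_{\D_{i,g}} W^{\la^{[1]}}(i_1)\otimes\cdots\otimes W^{\la^{[g]}}(i_g)$, which is what we want.

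Concretely, here is the short argument I would write:

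\begin{proof}
By Proposition \ref{i-decom-prop}, under the isomorphism $\ol{Z}_R^\la \cong W_R^{\la^{[1]}}\otimes\cdots\otimes W_R^{\la^{[g]}}$ we have
\[
\ol{Z}_R^\la(i) = \sum_{(i_1,\dots,i_g)\in\D_{i,g}} W_R^{\la^{[1]}}(i_1)\otimes\cdots\otimes W_R^{\la^{[g]}}(i_g).
\]
Each $W_R^{\la^{[k]}}$ is $R$-free, so $\big(W_R^{\la^{[1]}}\otimes\cdots\otimes W_R^{\la^{[g]}}\big)\big/\wp\big(W_R^{\la^{[1]}}\otimes\cdots\otimes W_R^{\la^{[g]}}\big) \cong W^{\la^{[1]}}\otimes\cdots\otimes W^{\la^{[g]}}$, and under this identification the isomorphism $\ol Z_R^\la \cong W_R^{\la^{[1]}}\otimes\cdots\otimes W_R^{\la^{[g]}}$ reduces to the isomorphism $\ol Z^\la \cong W^{\la^{[1]}}\otimes\cdots\otimes W^{\la^{[g]}}$ of \eqref{decom-Weyl}. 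Reduction modulo $\wp$ takes a finite sum of $R$-submodules to the sum of their images; moreover, since $W_R^{\la^{[k]}}(i_k)$ is a free $R$-module which is an $R$-direct summand of $W_R^{\la^{[k]}}$ (as is visible from the explicit bases in \ref{basis-W(i)}), the image of $W_R^{\la^{[1]}}(i_1)\otimes\cdots\otimes W_R^{\la^{[g]}}(i_g)$ in $W^{\la^{[1]}}\otimes\cdots\otimes W^{\la^{[g]}}$ equals $W^{\la^{[1]}}(i_1)\otimes\cdots\otimes W^{\la^{[g]}}(i_g)$, where $W^{\la^{[k]}}(i_k) = \big(W_R^{\la^{[k]}}(i_k) + \wp W_R^{\la^{[k]}}\big)\big/\wp W_R^{\la^{[k]}}$. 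Therefore
\[
\ol{Z}^\la(i) = \big(\ol Z_R^\la(i) + \wp\,\ol Z_R^\la\big)\big/\wp\,\ol Z_R^\la = \sum_{(i_1,\dots,i_g)\in\D_{i,g}} W^{\la^{[1]}}(i_1)\otimes\cdots\otimes W^{\la^{[g]}}(i_g),
\]
as claimed.
\end{proof}
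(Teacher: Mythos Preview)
Your overall strategy is correct and is the same as the paper's: deduce the corollary by reducing Proposition~\ref{i-decom-prop} modulo $\wp$. However, your written proof contains a false claim. You assert that $W_R^{\la^{[k]}}(i_k)$ is an $R$-direct summand of $W_R^{\la^{[k]}}$, citing the basis in \ref{basis-W(i)}. This is not so: whenever $\nu_\wp(d_T) < i_k$ for some $T$, the basis element $\pi^{i_k - \nu_\wp(d_T)} f_T$ lies in $W_R^{\la^{[k]}}(i_k)$ while $f_T$ does not, and the quotient $W_R^{\la^{[k]}} / W_R^{\la^{[k]}}(i_k)$ then has nonzero $\wp$-torsion, so the inclusion does not split.

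Fortunately the conclusion of that step survives, and your own ``alternative'' argument in the preceding discussion is the correct justification: each map $W_R^{\la^{[k]}}(i_k) \to W^{\la^{[k]}}(i_k)$ is surjective and tensor product is right exact, while over the field $F$ the map $W^{\la^{[1]}}(i_1) \otimes_F \cdots \otimes_F W^{\la^{[g]}}(i_g) \to W^{\la^{[1]}} \otimes_F \cdots \otimes_F W^{\la^{[g]}}$ is injective; hence the image of $W_R^{\la^{[1]}}(i_1) \otimes_R \cdots \otimes_R W_R^{\la^{[g]}}(i_g)$ in $W^{\la^{[1]}} \otimes_F \cdots \otimes_F W^{\la^{[g]}}$ is exactly $W^{\la^{[1]}}(i_1) \otimes_F \cdots \otimes_F W^{\la^{[g]}}(i_g)$. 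Replace the direct-summand sentence with this and the proof goes through.

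For comparison, the paper argues the same reduction more concretely: it writes down the surjection $\F\colon \ol{Z}_R^\la(i) \to \sum_{\D_{i,g}} W^{\la^{[1]}}(i_1) \otimes \cdots \otimes W^{\la^{[g]}}(i_g)$ and then verifies $\Ker\F = \ol{Z}_R^\la(i) \cap \wp\,\ol{Z}_R^\la$ by tracking the explicit basis elements $\ol{f}_T$ and $\pi^{i-\nu_\wp(d_T)}\ol{f}_T$ from the proof of Proposition~\ref{i-decom-prop}. Your abstract argument (once corrected) is cleaner and avoids this case-by-case check.
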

\begin{proof}
By definition, we have 
\[\ol{Z}^\la(i)=\big( \ol{Z}_R^\la(i) + \wp \ol{Z}_R^\la \big)\big/ \wp \ol{Z}_R^\la 
	\,\cong\, \ol{Z}_R^\la (i) \big/ \big(\ol{Z}_R^\la(i)\cap \wp \ol{Z}_R^\la\big)\]
and 
\begin{align*}
&W^{\la^{[1]}}(i_1)\otimes \cdots \otimes W^{\la^{[g]}}(i_g) \\
	& \hspace{3em}=\big(W^{\la^{[1]}}_R(i_1)+ \wp W^{\la^{[1]}}_R \big)\big/ \wp W^{\la^{[1]}}_R \otimes \cdots 
		\otimes \big( W^{\la^{[g]}}_R(i_g)+ \wp W^{\la^{[g]}}_R\big) \big/ \wp W^{\la^{[g]}}_R \\
	& \hspace{3em} \cong  W^{\la^{[1]}}_R(i_1)\big/ \big(W^{\la^{[1]}}_R(i_1) \cap \wp W^{\la^{[1]}}_R\big) 
		\otimes \cdots \otimes W^{\la^{[g]}}_R(i_g) \big/ \big( W^{\la^{[g]}}_R(i_1)\cap \wp W^{\la^{[g]}}\big).
\end{align*}
By Proposition \ref{i-decom-prop}, we have a surjective map
\[\F: \ol{Z}_R^\la(i) \ra \sum_{(i_1,\cdots ,i_g) \in \D_{i,g}} W^{\la^{[1]}}(i_1)\otimes \cdots \otimes W^{\la^{[g]}}(i_g). \]
We claim that $\Ker \F =\ol{Z}_R^\la(i)\cap \wp \ol{Z}^\la$. Then the claim implies the corollary. So we shall show the claim. 
By definition, it is clear that $\Ker \F$ is contained in $\ol{Z}^\la_R(i)\cap \wp \ol{Z}^\la$. 
Take $x=\sum_{T\in \CT_0^\Bp(\la)} r_T \ol{f}_T \in \ol{Z}^\la_R(i)\cap \wp \ol{Z}^\la$. Then $r_T \in \wp$ for any $T\in \CT_0^\Bp(\la)$. 
If $\nu_\wp(d_T)\geq i$, then $\ol{f}_T=f_{T^{[1]}}\otimes \cdots \otimes f_{T^{[g]}}$ is contained in 
$W_R^{\la^{[1]}}(i_1)\otimes \cdots \otimes W_R^{\la^{[g]}}(i_g)$ for some $(i_1,\cdots ,i_g) \in \D_{i,g}$ 
by the proof of Proposition \ref{i-decom-prop}. So we have $r_T \ol{f}_T \in \Ker \F$ for $T\in \CT_o^\Bp(\la)$ such that $\nu_\wp(d_T)\geq i$. 
If $\nu_\wp(d_T)<i$, then $r_T\ol{f}_T=r^\prime_T \pi^{i-\nu_\wp(d_T)}\ol{f}_T$ for some $r_T^\prime \in R$ since $x\in \ol{Z}_R^\la(i)$. 
By the proof of Proposition \ref{i-decom-prop}, for some $(i_1,\cdots ,i_g)\in \D_{i,g}$, 
$\pi^{i-\nu_\wp(d_T)}\ol{f}_T=\big(\pi^{i_1-\nu_\wp(d_{T^{[1]}})}f_{T^{[1]}}\big)\otimes \cdots \otimes 
	\big(\pi^{i_g-\nu_\wp(d_{T^{[g]}})}f_{T^{[g]}}\big)$ is contained in $W_R^{\la^{[1]}}(i_1)\otimes \cdots \otimes W_R^{\la^{[g]}}(i_g)$. 
Moreover one can find at least one $k$ such that $\nu_\wp(d_{T^{[k]}})<i_k$. Then the image of $\pi^{i_k-\nu_\wp(d_{T^{[k]}})}f_{T^{[k]}}$ 
in $W_R^{\la^{[k]}}(i_k)\big/(W_R^{\la^{[k]}}(i_k)\cap \wp W^{\la^{[k]}})$ is  zero. Hence for $T\in \CT_0^\Bp$ such that $\nu_\wp(d_T)<i$, 
$r_T \ol{f}_T$ is also contained in $\Ker \F$. Now the claim is proved, and the corollary follows.
\end{proof}

By using the corollary, we show the following lemma. 
\begin{lem} \label{i-decom lem}
Let $\la, \mu \in \vL^+$ be such that $\alp(\la)=\alp(\mu)$. For any $i\geq 0$, we have
\[\big[\ol{Z}^\la(i)\big/ \ol{Z}^\la(i+1) : \ol{L}^\mu \big] 
	=\sum_{(i_1,\cdots ,i_g) \in \D_{i,g}} 
		\prod_{k=1}^{g} \big[W^{\la^{[k]}}(i_k)\big/ W^{\la^{[k]}}(i_{k+1}) : L^{\mu^{[k]}} \big]\]
\end{lem}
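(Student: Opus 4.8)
The plan is to reduce the identity to the $\oSp$-module isomorphism
\[
\ol{Z}^\la(i)\big/\ol{Z}^\la(i+1)\ \cong\ \bigoplus_{(i_1,\cdots,i_g)\in\D_{i,g}}\ \bigotimes_{k=1}^{g} W^{\la^{[k]}}(i_k)\big/W^{\la^{[k]}}(i_k+1),
\]
where each summand is regarded as a module over $\Sc(\vL_{n_1})\otimes\cdots\otimes\Sc(\vL_{n_g})$, $(n_1,\cdots,n_g)=\alp(\la)$, via the algebra decomposition (\ref{decom-barSp}). Granting this, the lemma follows at once: by (\ref{decom-irr}) one has $\ol{L}^\mu\cong L^{\mu^{[1]}}\otimes\cdots\otimes L^{\mu^{[g]}}$, and the multiplicativity of composition multiplicities over a tensor product of finite-dimensional algebras with absolutely irreducible factors, as used to prove (\ref{decom-com}) in \cite[Corollary~4.16]{SW}, gives $\big[\bigotimes_k M_k:\bigotimes_k L^{\mu^{[k]}}\big]=\prod_k\big[M_k:L^{\mu^{[k]}}\big]$ for any finite-dimensional $\Sc(\vL_{n_k})$-modules $M_k$; taking $M_k=W^{\la^{[k]}}(i_k)/W^{\la^{[k]}}(i_k+1)$ and summing over $\D_{i,g}$ yields the claimed formula.

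To construct the isomorphism I would start from Corollary~\ref{cor-i-decom}, which writes $\ol{Z}^\la(i)=\sum_{(i_1,\cdots,i_g)\in\D_{i,g}}M_{(i_1,\cdots,i_g)}$ with $M_{(i_1,\cdots,i_g)}:=W^{\la^{[1]}}(i_1)\otimes\cdots\otimes W^{\la^{[g]}}(i_g)$; here each $M_{(i_1,\cdots,i_g)}$ is an $\oSp$-submodule of $\ol{Z}^\la$ because each $W^{\la^{[k]}}(i_k)$ is a $\Sc(\vL_{n_k})$-submodule of $W^{\la^{[k]}}$, by associativity of the bilinear form exactly as for $W^\la$. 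Fixing $(i_1,\cdots,i_g)\in\D_{i,g}$, the composite $M_{(i_1,\cdots,i_g)}\hookrightarrow\ol{Z}^\la(i)\twoheadrightarrow\ol{Z}^\la(i)/\ol{Z}^\la(i+1)$ annihilates the submodule $\sum_{k=1}^{g}W^{\la^{[1]}}(i_1)\otimes\cdots\otimes W^{\la^{[k]}}(i_k+1)\otimes\cdots\otimes W^{\la^{[g]}}(i_g)$, since each of its summands equals $M_{(j_1,\cdots,j_g)}$ for some $(j_1,\cdots,j_g)$ with $j_1+\cdots+j_g=i+1$, hence lies in $\ol{Z}^\la(i+1)$ again by Corollary~\ref{cor-i-decom}. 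Over the field $F$ that submodule is precisely the kernel of the canonical surjection $M_{(i_1,\cdots,i_g)}\twoheadrightarrow\bigotimes_k W^{\la^{[k]}}(i_k)/W^{\la^{[k]}}(i_k+1)$, this being the standard description of the kernel of a tensor product of surjections; so the composite factors through this tensor quotient, and summing over $\D_{i,g}$ produces an $\oSp$-linear surjection
\[
\Phi_i\colon\ \bigoplus_{(i_1,\cdots,i_g)\in\D_{i,g}}\ \bigotimes_{k=1}^{g} W^{\la^{[k]}}(i_k)\big/W^{\la^{[k]}}(i_k+1)\ \twoheadrightarrow\ \ol{Z}^\la(i)\big/\ol{Z}^\la(i+1),
\]
surjectivity being immediate from $\ol{Z}^\la(i)=\sum_{\D_{i,g}}M_{(i_1,\cdots,i_g)}$.

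It then remains to see that each $\Phi_i$ is injective, which I would deduce by a global dimension count over $F$. For each $k$, summing $\dim_F\bigl(W^{\la^{[k]}}(i_k)/W^{\la^{[k]}}(i_k+1)\bigr)$ over $i_k\geq0$ gives $\dim_F W^{\la^{[k]}}$, since the Jantzen filtration of $W^{\la^{[k]}}$ is exhaustive and finite; likewise $\sum_{i\geq0}\dim_F\bigl(\ol{Z}^\la(i)/\ol{Z}^\la(i+1)\bigr)=\dim_F\ol{Z}^\la$. Using $\ol{Z}^\la\cong W^{\la^{[1]}}\otimes\cdots\otimes W^{\la^{[g]}}$ from (\ref{decom-Weyl}) and distributing the product of sums, we obtain
\[
\sum_{i\geq0}\dim_F\bigl(\ol{Z}^\la(i)/\ol{Z}^\la(i+1)\bigr)=\prod_{k=1}^{g}\dim_F W^{\la^{[k]}}=\sum_{i\geq0}\ \sum_{(i_1,\cdots,i_g)\in\D_{i,g}}\ \prod_{k=1}^{g}\dim_F\bigl(W^{\la^{[k]}}(i_k)/W^{\la^{[k]}}(i_k+1)\bigr),
\]
that is, $\sum_i\dim_F(\mathrm{target}\,\Phi_i)=\sum_i\dim_F(\mathrm{source}\,\Phi_i)$. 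Since each $\Phi_i$ is surjective we have $\dim_F(\mathrm{source}\,\Phi_i)\geq\dim_F(\mathrm{target}\,\Phi_i)$ for every $i$; together with equality of the two totals this forces $\dim_F(\mathrm{source}\,\Phi_i)=\dim_F(\mathrm{target}\,\Phi_i)$ for every $i$, so each $\Phi_i$ is an isomorphism, proving the displayed isomorphism and hence the lemma.

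The one genuinely delicate point is that Corollary~\ref{cor-i-decom} presents $\ol{Z}^\la(i)$ as a sum which is \emph{not} direct, so one cannot pass to the quotient $\ol{Z}^\la(i)/\ol{Z}^\la(i+1)$ term by term; it is exactly the dimension count that forces the natural surjection $\Phi_i$ to be an isomorphism. Everything else is formal, relying only on Corollary~\ref{cor-i-decom}, the isomorphisms (\ref{decom-barSp}), (\ref{decom-Weyl}) and (\ref{decom-irr}), the multiplicativity of composition multiplicities over a tensor product of algebras, and the fact that each $W^{\la^{[k]}}(i_k)$ is a submodule of $W^{\la^{[k]}}$.
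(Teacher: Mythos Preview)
Your proof is correct and arrives at the same intermediate isomorphism
\[
\ol{Z}^\la(i)\big/\ol{Z}^\la(i+1)\ \cong\ \bigoplus_{(i_1,\dots,i_g)\in\D_{i,g}}\ \bigotimes_{k=1}^{g} W^{\la^{[k]}}(i_k)\big/W^{\la^{[k]}}(i_k+1)
\]
as the paper, after which the deduction of the multiplicity formula via (\ref{decom-irr}) is identical. The difference is in how the isomorphism is obtained. The paper argues \emph{locally} for each fixed $i$: it first observes that for distinct $(i_1,\dots,i_g),(j_1,\dots,j_g)\in\D_{i,g}$ one has $M_{(i_1,\dots,i_g)}\cap M_{(j_1,\dots,j_g)}\subset M_{(k_1,\dots,k_g)}$ with $k_l=\max(i_l,j_l)$, hence the intersection lies in $\ol{Z}^\la(i+1)$, and concludes that the sum in Corollary~\ref{cor-i-decom} becomes direct modulo $\ol{Z}^\la(i+1)$; it then asserts that for the natural surjection $\Psi\colon M_{(i_1,\dots,i_g)}\to\bigotimes_k W^{\la^{[k]}}(i_k)/W^{\la^{[k]}}(i_k+1)$ one has $\Ker\Psi=M_{(i_1,\dots,i_g)}\cap\ol{Z}^\la(i+1)$. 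You instead argue \emph{globally}: you use only the easy inclusion $\Ker\Psi\subset\ol{Z}^\la(i+1)$ to produce the surjection $\Phi_i$, and then your dimension count, summed over all $i$, forces every $\Phi_i$ to be an isomorphism at once. Your route is a little cleaner, since it sidesteps both the directness claim (pairwise intersections lying in $\ol{Z}^\la(i+1)$ do not by themselves give directness of the sum modulo $\ol{Z}^\la(i+1)$) and the reverse inclusion $M_{(i_1,\dots,i_g)}\cap\ol{Z}^\la(i+1)\subset\Ker\Psi$, neither of which the paper spells out, though both are easily read off from the explicit basis of \S\ref{basis-W(i)}. The paper's route has the minor advantage of not needing to sum over $i$.
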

\begin{proof}
By Corollary \ref{cor-i-decom}, we have

{\footnotesize
\begin{align}&\ol{Z}^\la(i)\,\big/\,\ol{Z}^\la(i+1) 
 = \Big(\sum_{(i_1,\cdots ,i_g) \in \D_{i,g}}W^{\la^{[1]}}(i_1)\otimes \cdots \otimes W^{\la^{[g]}}(i_g)\Big)\Big/ \ol{Z}^\la(i+1) \label{qwq} \\
 & \hspace{-1em} 
=\sum_{(i_1,\cdots ,i_g) \in \D_{i,g}} \big( W^{\la^{[1]}}(i_1)\otimes \cdots \otimes W^{\la^{[g]}}(i_g)\big) \Big/
	\Big(\ol{Z}^\la (i+1) \cap \big( W^{\la^{[1]}}(i_1)\otimes \cdots \otimes W^{\la^{[g]}}(i_g)\big) \Big). \notag
\end{align}}
If $(i_1,\cdots , i_g)\not= (j_1,\cdots , j_g)$ such that $i_1+\cdots + i_g= j_1+\cdots + j_g=i$, then 
\[\big( W^{\la^{[1]}}(i_1)\otimes \cdots \otimes W^{\la^{[g]}}(i_g)\big) 
	\cap \big( W^{\la^{[1]}}(j_1)\otimes \cdots \otimes W^{\la^{[g]}}(j_g)\big)  \subset W^{\la^{[1]}}(k_1)\otimes \cdots \otimes W^{\la^{[g]}}(k_g),\]
where $k_l=\max\{i_l, j_l\}$. Since $(i_1,\cdots , i_g)\not= (j_1,\cdots , j_g)$, $k_1+\cdots +k_g\geq i+1$. 
Hence we have 
\[\big( W^{\la^{[1]}}(i_1)\otimes \cdots \otimes W^{\la^{[g]}}(i_g)\big) 
	\cap \big( W^{\la^{[1]}}(j_1)\otimes \cdots \otimes W^{\la^{[g]}}(j_g)\big)  \subset \ol{Z}^\la(i+1).\]
It follows from this, we see that the sum in (\ref{qwq}) is a direct sum.

For $(i_1,\cdots ,i_g) \in \D_{i,g}$, we consider a surjective $\oSp$-homomorphism
{\small \[\Psi :W^{\la^{[1]}}(i_1)\otimes \cdots \otimes W^{\la^{[g]}}(i_g) \ra
	W^{\la^{[1]}}(i_1)\big/ W^{\la^{[1]}}(i_1+1) \otimes \cdots \otimes W^{\la^{[g]}}(i_g)\big/ W^{\la^{[g]}}(i_g+1) \] }
Then we have $\Ker \Psi =\ol{Z}^\la(i+1)\cap \big(W^{\la^{[1]}}(i_1)\otimes \cdots \otimes W^{\la^{[g]}}(i_g) \big)$ 
under the setting in Corollary \ref{cor-i-decom}. 
By noting that (\ref{qwq}) is a direct sum, we have 
\[\ol{Z}^\la(i) \big/ \ol{Z}^\la(i+1) \cong \bigoplus_{(i_1,\cdots ,i_g) \in \D_{i,g}}
	\Big( W^{\la^{[1]}}(i_1)\big/ W^{\la^{[1]}}(i_1+1) \otimes \cdots \otimes W^{\la^{[g]}}(i_g)\big/ W^{\la^{[g]}}(i_g+1)\Big)\]
Since $\ol{L}^\mu \cong L^{\mu^{[1]}}\otimes \cdots \otimes L^{\mu^{[g]}}$, we have 
{\footnotesize
\begin{align*}
\Big[ \ol{Z}^\la (i) \big/ \ol{Z}^\la(i+1) : \ol{L}^\mu \Big] 
&=\sum_{(i_1,\cdots ,i_g) \in \D_{i,g}} 
	\Big[ W^{\la^{[1]}}(i_1)\big/ W^{\la^{[1]}}(i_1+1) \otimes \cdots \otimes W^{\la^{[g]}}(i_g)\big/ W^{\la^{[g]}}(i_g+1) : \ol{L}^\mu \Big]	\\
&= \sum_{(i_1,\cdots ,i_g) \in \D_{i,g}} \prod_{k=1}^{g}
	\Big[W^{\la^{[k]}}(i_k)\big/ W^{\la^{[k]}}(i_k+1) : L^{\mu^{[k]}} \Big]
\end{align*} }
The lemma is proved
\end{proof}

We define $v$-decomposition numbers of $\Sc(\vL_{n_k})$ for $k=1,\cdots , g$ by
\[d_{\la^{[k]} \mu^{[k]}}(v):= \sum_{i_k \geq 0} \Big[ W^{\la^{[k]}}(i_k)\big/ W^{\la^{[k]}}(i_k+1) : L^{\mu^{[k]}}\Big] \cdot v^i \]
as in the case of $\Sc(\vL)$. Then we have the following theorem. 

\begin{thm} \label{th-decom-bar-v}
For $\la,\mu \in \vL^+$ such that $\alp(\la)=\alp(\mu)$, we have 
\[d_{\la\mu}(v)=\ol{d}_{\la\mu}(v) =\prod_{k=1}^{g} d_{\la^{[k]} \mu^{[k]}}(v).\]
\end{thm}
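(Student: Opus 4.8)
The plan is to combine the two preceding chains of equalities. By Theorem~\ref{th-vdecom} we already know that $d_{\la\mu}(v)=\ol{d}_{\la\mu}(v)$ whenever $\alp(\la)=\alp(\mu)$, so it suffices to prove the product formula for $\ol{d}_{\la\mu}(v)$. First I would recall the definitions: by construction
\[\ol{d}_{\la\mu}(v)=\sum_{i\ge0}\big[\ol{Z}^\la(i)\big/\ol{Z}^\la(i+1):\ol{L}^\mu\big]\cdot v^i,\]
and for each $k$ the polynomial $d_{\la^{[k]}\mu^{[k]}}(v)$ is defined as $\sum_{i_k\ge0}\big[W^{\la^{[k]}}(i_k)/W^{\la^{[k]}}(i_k+1):L^{\mu^{[k]}}\big]\cdot v^{i_k}$.

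The key input is Lemma~\ref{i-decom lem}, which expresses each coefficient $\big[\ol{Z}^\la(i)/\ol{Z}^\la(i+1):\ol{L}^\mu\big]$ as
\[\sum_{(i_1,\cdots,i_g)\in\D_{i,g}}\prod_{k=1}^g\big[W^{\la^{[k]}}(i_k)\big/W^{\la^{[k]}}(i_k+1):L^{\mu^{[k]}}\big].\]
So I would substitute this into the definition of $\ol{d}_{\la\mu}(v)$ and then interchange the two summations: summing over $i\ge0$ and then over $(i_1,\dots,i_g)\in\D_{i,g}$ with $i_1+\cdots+i_g=i$ is the same as summing freely over all $(i_1,\dots,i_g)\in\ZZ_{\ge0}^g$, with the factor $v^i=v^{i_1+\cdots+i_g}=v^{i_1}\cdots v^{i_g}$ distributing across the product. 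This gives
\[\ol{d}_{\la\mu}(v)=\sum_{(i_1,\cdots,i_g)\in\ZZ_{\ge0}^g}\prod_{k=1}^g\Big(\big[W^{\la^{[k]}}(i_k)\big/W^{\la^{[k]}}(i_k+1):L^{\mu^{[k]}}\big]\cdot v^{i_k}\Big)=\prod_{k=1}^g\Big(\sum_{i_k\ge0}\big[W^{\la^{[k]}}(i_k)\big/W^{\la^{[k]}}(i_k+1):L^{\mu^{[k]}}\big]\cdot v^{i_k}\Big),\]
which is exactly $\prod_{k=1}^g d_{\la^{[k]}\mu^{[k]}}(v)$. Finally I would invoke Theorem~\ref{th-vdecom} to replace $\ol{d}_{\la\mu}(v)$ by $d_{\la\mu}(v)$ on the left, completing the proof.

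There is essentially no obstacle here: the substantive work has already been done in Lemma~\ref{i-decom lem} (whose proof relies on Corollary~\ref{cor-i-decom}, the decomposition of the integral Jantzen filtration, and the multiplicativity of the bilinear form in Lemma~\ref{bi-decom}). The only point that needs a word of care is the legitimacy of the Fubini-type interchange of sums and the regrouping of the product of generating functions; since all the Jantzen filtrations involved are finite and all multiplicities are non-negative integers, every sum in sight is a finite sum of non-negative terms, so the rearrangement is unconditionally valid. One should also note that the case $\alp(\la)\ne\alp(\mu)$ is not claimed here, consistent with the hypothesis, and that by Proposition~\ref{vd-bar-p}(\ref{q1}) both sides would vanish trivially in that case if one wished to state it.
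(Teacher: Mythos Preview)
Your proposal is correct and follows essentially the same approach as the paper: invoke Theorem~\ref{th-vdecom} for the first equality, substitute Lemma~\ref{i-decom lem} into the definition of $\ol{d}_{\la\mu}(v)$, and rearrange the double sum over $i$ and $(i_1,\dots,i_g)\in\D_{i,g}$ into the product of the individual generating functions. The paper's proof is exactly this computation written out line by line.
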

\begin{proof}
The first equality follows from Theorem \ref{th-vdecom}. So we prove the second equality. 
By Lemma \ref{i-decom lem}, we have 
\begin{align*}
\ol{d}_{\la\mu}(v)&= \sum_{i\geq 0} \big[ \ol{Z}^\la(i) \big/ \ol{Z}^\la(i+1) : \ol{L}^\mu \big] \cdot v^i\\
&= \sum_{i\geq 0} \Big( \sum_{(i_1,\cdots ,i_g) \in \D_{i,g}} \prod_{k=1}^{g}
	\big[W^{\la^{[k]}}(i_k)\big/ W^{\la^{[k]}}(i_k+1) : L^{\mu^{[k]}} \big] \Big) \cdot v^i \\
&=\sum_{i\geq 0} \sum_{(i_1,\cdots ,i_g) \in \D_{i,g}} \Big(\prod_{k=1}^{g}
	\big[W^{\la^{[k]}}(i_k)\big/ W^{\la^{[k]}}(i_k+1) : L^{\mu^{[k]}} \big] \cdot v^{i_k} \Big)\\
&=\sum_{(i_1,\cdots ,i_g)\in \ZZ_{\geq 0}^g} \Big(\prod_{k=1}^{g}
	\big[W^{\la^{[k]}}(i_k)\big/ W^{\la^{[k]}}(i_k+1) : L^{\mu^{[k]}} \big] \cdot v^{i_k} \Big)\\
&= \prod_{k=1}^{g} \sum_{i_k\geq 0}\big[W^{\la^{[k]}}(i_k)\big/ W^{\la^{[k]}}(i_k+1) : L^{\mu^{[k]}} \big] \cdot v^{i_k} \\
&=\prod_{k=1}^{g}d_{\la^{[k]}\mu^{[k]}}(v).
\end{align*}
This proves the theorem.
\end{proof}
\section{$v$-Decomposition numbers for Ariki-Koike algebras}
We keep the notation in the previous section. 
We consider the $v$-decomposition numbers of the Ariki-Koike algebra $\He$, and 
show that similar results hold as in  the previous section.
\para
Let $\w=(-,\cdots,-,(1^n))$ be the $r$-partition and $T^\w$ be the $\w$-tableau of type $\w$. 
Since $\vf_{T^\w T^\w}$ is an identity map on $M^\w$ and a zero map $M^\mu$ for $\mu\in \vL$ such that $\mu \not= \w$, 
$\vf_{T^\w T^\w}$ is an idempotent in $\Sc$. 
Moreover we  see that $\vf_{T^\w T^\w}\Sc\vf_{T^\w T^\w}=\Hom_{\He}(M^\w,M^\w)=\Hom_{\He}(\He,\He)\cong \He$. 
It is well known that, for an $\Sc$-module $M$, $M\vf_{T^\w T^\w}$ becomes a $\He$-module 
	through the isomorphism $\vf_{T^\w T^\w}\Sc \vf_{T^\w T^\w}\cong \He$. 
Then we can define a functor, the so-called \lq\lq Schur functor", 
from the category of right $\Sc$-modules to the category of right $\He$-modules by $M \mapsto M \vf_{T^\w T^\w}$. 
The following facts are known by \cite[Proposition 2.17]{JM00}.
\begin{align}
\label{Weyl-Specht} W^\la \vf_{T^\w T^\w} \cong S^\la \quad \text{as }\He\text{-modules} \quad (\la\in \vL^+)\\
\label{L-D} L^\mu \vf_{T^\w T^\w} \cong D^\mu \quad \text{as }\He\text{-modules} \quad (\mu\in \vL^+)\\
[W^\la : L^\mu ]=[S^\la : D^\mu] \quad (\la,\mu \in \vL^+ \text{ such that }D^\mu \not=0)
\end{align}
where $[S^\la : D^\mu]$ is the decomposition number of $D^\mu$ in $S^\la$. 

\para
One can define the Jantzen filtration of the Specht module $S^\la$ in a similar way as in the case of $W^\la$, 
and we use a similar notation for this case. 
Then one can  define the $v$-decomposition number of $\He$, for $\la, \mu \in \vL^+$ such taht $D^\mu \not=0$, by
\[d^\He_{\la\mu}(v) :=\sum_{i\geq0} \big[S^\la(i)/S^\la(i+1) : D^\mu \big]\cdot v^i. \]
We have the following lemma.

\begin{lem} \label{W(i)-S(i)}
Let $\la \in \vL^+$ and $i\geq 0$. Under the isomorphism in (\ref{Weyl-Specht}), we have 
\[W^\la(i) \vf_{T^\w T^\w} =S^\la(i). \]
\end{lem}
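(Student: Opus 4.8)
The plan is to mimic the argument establishing Lemma~\ref{Sp-include}, but now using the Schur functor $M \mapsto M\vf_{T^\w T^\w}$ in place of the embedding $f_\la$. Working over $R$, I would first recall that the Schur functor sends $W_R^\la$ to $S_R^\la$ and that this identification is compatible with the bilinear forms in the following sense: the form $\lan\,,\,\ran_\He$ on $S_R^\la$ is (up to the standard normalisation) the restriction of the form $\lan\,,\,\ran$ on $W_R^\la$, because $\vf_{T^\w T^\w}$ is an idempotent with $\vf_{T^\w T^\w}^\ast = \vf_{T^\w T^\w}$ and $\vf_{T^\w T^\w}\Sc_R\vf_{T^\w T^\w}\cong \He_R$. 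Concretely, for $x,y \in W_R^\la$ one has $\lan x\vf_{T^\w T^\w}, y\vf_{T^\w T^\w}\ran_\He = \lan x\vf_{T^\w T^\w}, y\ran$ by associativity \eqref{asso-bi-W} together with $\vf_{T^\w T^\w}^\ast = \vf_{T^\w T^\w}$ and idempotency.

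Next I would prove the inclusion $W_R^\la(i)\vf_{T^\w T^\w} \subseteq S_R^\la(i)$. Take $x \in W_R^\la(i)$; then $\lan x, z\ran \in \wp^i$ for all $z \in W_R^\la$, and in particular $\lan x\vf_{T^\w T^\w}, y\ran_\He = \lan x\vf_{T^\w T^\w}, y\ran = \lan x, y\vf_{T^\w T^\w}\ran \in \wp^i$ for every $y \in S_R^\la = W_R^\la\vf_{T^\w T^\w}$, since $y\vf_{T^\w T^\w} = y$. Hence $x\vf_{T^\w T^\w} \in S_R^\la(i)$.

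For the reverse inclusion I would use that the Schur functor is exact and, crucially, that the map $W_R^\la \to S_R^\la$ is surjective with $W_R^\la\vf_{T^\w T^\w} = S_R^\la$ as $R$-modules; so any $\bar x \in S_R^\la(i)$ lifts to some $x \in W_R^\la$ with $x\vf_{T^\w T^\w} = \bar x$. One does not get directly that $x \in W_R^\la(i)$, but one can replace $x$ by $x\vf_{T^\w T^\w}$ (which represents the same element of $S_R^\la$) and then check that $\lan x\vf_{T^\w T^\w}, z\ran \in \wp^i$ for \emph{all} $z \in W_R^\la$: writing $\lan x\vf_{T^\w T^\w}, z\ran = \lan x\vf_{T^\w T^\w}, z\vf_{T^\w T^\w}\ran$ (again using idempotency and self-adjointness of $\vf_{T^\w T^\w}$, moving it across via \eqref{asso-bi-W}), this equals $\lan x\vf_{T^\w T^\w}, z\vf_{T^\w T^\w}\ran_\He \in \wp^i$ because $z\vf_{T^\w T^\w} \in S_R^\la$ and $x\vf_{T^\w T^\w} = \bar x \in S_R^\la(i)$. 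Therefore $x\vf_{T^\w T^\w} \in W_R^\la(i)$, giving $\bar x \in W_R^\la(i)\vf_{T^\w T^\w}$. Combining the two inclusions yields $W_R^\la(i)\vf_{T^\w T^\w} = S_R^\la(i)$, and then reducing modulo $\wp$ (using exactness of the Schur functor and the definitions $W^\la(i) = (W_R^\la(i) + \wp W_R^\la)/\wp W_R^\la$, $S^\la(i) = (S_R^\la(i) + \wp S_R^\la)/\wp S_R^\la$) gives $W^\la(i)\vf_{T^\w T^\w} = S^\la(i)$.

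The main obstacle I anticipate is the bookkeeping around the bilinear form under the Schur functor: one must be careful that the form $\lan\,,\,\ran_\He$ on $S_R^\la$ really is computed by the form $\lan\,,\,\ran$ on $W_R^\la$ composed with $\vf_{T^\w T^\w}$, rather than merely being proportional to it by a unit or, worse, by a non-unit of $R$ (which would shift the valuations and break the statement). This compatibility should follow from \cite[Proposition 2.17]{JM00} or be checked directly from the cellular bases, but it is the one point that needs genuine care; once it is in place, the rest is the symmetric "move $\vf_{T^\w T^\w}$ across the form'' manipulation exactly as in Lemma~\ref{Sp-include}.
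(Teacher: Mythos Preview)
Your proposal is correct and follows essentially the same route as the paper: both directions are proved over $R$ by transporting the bilinear form across the idempotent $\vf_{T^\w T^\w}$, and the paper establishes precisely the form compatibility you flag as the delicate point, namely $\lan \vf_S,\vf_T\ran=\lan m_{\fs},m_{\ft}\ran_\He$ for $S,T\in\CT_0(\la,\w)$, via the explicit basis identification $\vf_T\vf_{T^\w T^\w}=m_\ft$. Your reverse inclusion, arguing abstractly from $e^\ast=e$ and $e^2=e$ that $xe\in W_R^\la(i)$, is a clean repackaging of the paper's argument, which instead picks the specific lift $x=\sum r_\ft\vf_T$ (already satisfying $xe=x$) and invokes the vanishing of $\lan\vf_T,\vf_S\ran$ across different types to reach the same conclusion.
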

\begin{proof}
It is clear that $W^\la\vf_{T^\w T^\w}$ has a basis $\{\vf_T\,|\, T\in \CT_0(\la,\w)\}$. 
We have a bijective correspondence between $\CT_0(\la,\w)$ and  $\Std(\la)$ by $T\leftrightarrow \ft$ such that $\w(\ft)=T$. 
Moreover, under the isomorphism in (\ref{Weyl-Specht}), we have 
\begin{align}\label{zzz}
&\vf_T\vf_{T^\w T^\w}=\begin{cases} m_{\ft} &\text{ if }T \in \CT_0(\la,\w)\\ 0 &\text{ if }T \not\in \CT_0(\la,\w) \end{cases}\\
\label{zzzzz}
&\lan \vf_S, \vf_T \ran=\lan m_{\fs}, m_{\ft} \ran_\He \quad \text{ for } S=\w(\fs),T=\w(\ft)\in \CT_0(\la ,\w )
\end{align}
by a similar argument as in the proof of \cite[Theorem 4.18]{M-book}.

First, we show the inclusion $W^\la(i)\vf_{T^\w T^\w} \subseteq S^\la(i)$. 
Take $x \in W_R^\la(i)$. Then $\lan x, \vf_T \ran \in \wp^i$ for any $T\in \CT_0(\la)$. 
It follows that
\[\lan x\cdot \vf_{T^\w T^\w}\,,\, \vf_T \ran =\lan x\,,\, \vf_T\cdot\vf_{T^\w T^\w}\ran \in \wp^i \quad\text{ for any } T\in \CT_0(\la).\]
This shows that 
\[\lan x \cdot \vf_{T^\w T^\w}\,,\, m_\ft \ran_\He \in \wp^i \quad \text{ for any }\ft \in \Std (\la)\]
by (\ref{zzz}) and (\ref{zzzzz}). Hence $x\cdot \vf_{T^\w T^\w} \in S_R^\la(i)$, and the claim follows by taking the quotient.

Next, we show the converse inclusion $W^\la(i)\vf_{T^\w T^\w} \supseteq S^\la(i)$. 
Take $y \in S^\la(i)$. Then we have 
\begin{equation}\label{zzxx}
\lan y, m_{\fs}\ran_\He \in \wp^i \quad\text{ for any }\fs\in \Std(\la).
\end{equation}
Write $y=\sum_{\ft\in \Std(\la)}r_{\ft}m_{\ft}$, and put $x=\sum_{T_\in \CT_0(\la,\w)}r_{\ft}\vf_T \in W^\la$, 
where $T$ is the $\la$-tableau of type $\w$ corresponding to $\ft$. Then we have $y=x\cdot \vf_{T^\w T^\w}$, and 
\[\lan x, \vf_S \ran \in \wp^i \quad \text{ for any }S\in \CT_0(\la,\w)\] 
by (\ref{zzz}), (\ref{zzzzz}) and (\ref{zzxx}). 
Since $\lan \vf_T, \vf_S \ran =0$ if the type of $T$ is not the same as the type of $S$, we have 
\[\lan x, \vf_S \ran \in \wp^i \quad \text{ for any } S\in \CT_0(\la).\]
This shows that $x\in W^\la_R(i)$, and the claim follows. The lemma is proved.
\end{proof}

This lemma implies the following proposition.
\begin{prop} \label{prop-W(i)-S(i)}
Take $\la,\mu \in \vL^+$ such that $D^\mu \not=0$. Then for any $i \geq 0$, we have 
\[\big[W^\la(i)/W^\la(i+1) : L^\mu \big]=\big[S^\la(i) /S^\la(i+1) : D^\mu \big]. \]
In particular, we have $d_{\la\mu}(v)=d_{\la\mu}^\He(v)$.
\end{prop}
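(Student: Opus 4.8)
The plan is to deduce the proposition formally from Lemma \ref{W(i)-S(i)} together with the exactness of the Schur functor $F\colon M\mapsto M\vf_{T^\w T^\w}$. Since $\vf_{T^\w T^\w}$ is an idempotent of $\Sc$, the functor $F$ (which is naturally isomorphic to $-\otimes_\Sc \Sc\vf_{T^\w T^\w}$, tensoring with the projective left $\Sc$-module $\Sc\vf_{T^\w T^\w}$) is exact. Applying $F$ to the short exact sequence $0\to W^\la(i+1)\to W^\la(i)\to W^\la(i)/W^\la(i+1)\to 0$ of $\Sc$-modules yields a short exact sequence of $\He$-modules $0\to W^\la(i+1)\vf_{T^\w T^\w}\to W^\la(i)\vf_{T^\w T^\w}\to \big(W^\la(i)/W^\la(i+1)\big)\vf_{T^\w T^\w}\to 0$. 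By Lemma \ref{W(i)-S(i)} the first two terms are $S^\la(i+1)$ and $S^\la(i)$, so $\big(W^\la(i)/W^\la(i+1)\big)\vf_{T^\w T^\w}\cong S^\la(i)/S^\la(i+1)$ as $\He$-modules.

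Next I would record the standard multiplicity statement for the Schur functor: for any finite-dimensional $\Sc$-module $M$ and any $\mu\in\vL^+$ with $D^\mu\not=0$, one has $\big[M\vf_{T^\w T^\w}:D^\mu\big]=\big[M:L^\mu\big]$. This follows from exactness of $F$ by reduction to composition factors, $\big[M\vf_{T^\w T^\w}:D^\mu\big]=\sum_{\nu}\big[M:L^\nu\big]\,\big[L^\nu\vf_{T^\w T^\w}:D^\mu\big]$, together with (\ref{L-D}): $L^\nu\vf_{T^\w T^\w}\cong D^\nu$ is either the irreducible $\He$-module $D^\nu$ (when $D^\nu\not=0$) or $0$, and since the nonzero $D^\nu$ are pairwise non-isomorphic (Section \ref{pre}), $\big[L^\nu\vf_{T^\w T^\w}:D^\mu\big]=\d_{\nu\mu}$ whenever $D^\mu\not=0$.

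Combining these two points with $M=W^\la(i)/W^\la(i+1)$ gives
\[\big[W^\la(i)/W^\la(i+1):L^\mu\big]=\big[\big(W^\la(i)/W^\la(i+1)\big)\vf_{T^\w T^\w}:D^\mu\big]=\big[S^\la(i)/S^\la(i+1):D^\mu\big],\]
which is the first assertion. Finally, multiplying by $v^i$ and summing over $i\geq 0$ converts this equality of multiplicities into $d_{\la\mu}(v)=d_{\la\mu}^\He(v)$, directly from the definitions of the two $v$-decomposition numbers in \S\ref{def-vdecome} and in Section 3.

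As for the main obstacle: there is essentially none beyond Lemma \ref{W(i)-S(i)}, which carries the real content (the compatibility of Jantzen filtrations with the Schur functor). The only point requiring a little care is making precise the passage from the module isomorphism to an equality of composition multiplicities — that is, the exactness of $F$ and the fact that $F$ annihilates exactly those $L^\nu$ with $D^\nu=0$ while mapping the remaining simples bijectively onto the irreducible $\He$-modules. This is routine given (\ref{L-D}) and the classification of the $D^\mu$ recalled in Section \ref{pre}.
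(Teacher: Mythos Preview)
Your proof is correct and follows essentially the same approach as the paper: both apply the Schur functor to a filtration of $W^\la(i)$ using Lemma \ref{W(i)-S(i)} to identify the endpoints and (\ref{L-D}) to handle the simple quotients, with exactness of $M\mapsto M\vf_{T^\w T^\w}$ doing the work. The only difference is packaging---you state a general multiplicity identity $[M\vf_{T^\w T^\w}:D^\mu]=[M:L^\mu]$, while the paper chooses an explicit composition series of $W^\la(i)$ refining the Jantzen filtration and reads off the result directly.
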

\begin{proof}
We consider the $\Sc$-module filtration
\[W^\la(i)=W_0 \supsetneqq W_1 \supsetneqq \cdots \supsetneqq W_k=W^\la(i+1)\] 
such that $W_j/W_{j+1}\cong L^{\mu_j}$. 
By applying the Schur functor, together with Lemma \ref{W(i)-S(i)}, we have 
\[S^\la(i)=W^\la(i)\vf_{T^w T^w}\supset W_1\vf_{T^\w T^\w} \supset \cdots 
	\supset W_k\vf_{T^\w T^\w}=W^\la(i+1)\vf_{T^\w T^\w}=S^\la(i+1),\]
where $W_j\vf_{T^\w T^\w}/W_{j+1}\vf_{T^\w T^\w}\cong (W_j/W_{j+1})\vf_{T^\w T^\w}\cong L^{\mu_j}\vf_{T^\w T^\w} \cong D^{\mu_j}$ 
by (\ref{L-D}). The proposition follows from this.
\end{proof}

For $\la \in \vL^+$ such that $\alp(\la)=(n_1,\cdots,n_g)$, $\la^{[k]}$ is an $r_k$-partition of $n_k$. 
Then we have the Specht module $S^{\la^{[k]}}$ and its unique quotient $D^{\la^{[k]}}$ for the Ariki-Koike algebra $\He_{n_k,r_k}$. 
Moreover for $\la,\mu \in \vL^+$ such that $\alp(\la)=\alp(\mu)=(n_1,\cdots ,n_g)$, 
we have the $v$-decomposition number $d^\He_{\la^{[k]}\mu^{[k]}}(v)$ for $\He_{n_k r_k}$. 
Combining Theorem \ref{th-decom-bar-v} with Proposition \ref{prop-W(i)-S(i)}, we have the following result. 

\begin{thm} \label{th-vdecom-AK}
Let $\la,\mu \in \vL^+$ such that $\alp(\la)=\alp(\mu)$. Assume that $D^\mu \not= 0$ and $D^{\mu^{[k]}}\not=0 $ for any $k=1,\cdots ,g$. 
Then we have 
\[d_{\la\mu}^\He(v)=\prod_{k=1}^{g} d_{\la^{[k]}\mu^{[k]}}^\He(v).\] 
\end{thm}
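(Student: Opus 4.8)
The strategy is to combine the two product formulas already in hand: Theorem~\ref{th-decom-bar-v}, which expresses the $v$-decomposition number $d_{\la\mu}(v)$ of $\Sc(\vL)$ as $\prod_{k=1}^g d_{\la^{[k]}\mu^{[k]}}(v)$, and Proposition~\ref{prop-W(i)-S(i)}, which identifies the $v$-decomposition numbers of the cyclotomic $q$-Schur algebra with those of the corresponding Ariki-Koike algebra via the Schur functor. So the whole argument is a matter of applying Proposition~\ref{prop-W(i)-S(i)} once on the left for $\Sc(\vL)$ and once on the right for each factor $\Sc(\vL_{n_k})$, and then invoking Theorem~\ref{th-decom-bar-v}.

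First I would note that, under the hypothesis $D^\mu\not=0$, Proposition~\ref{prop-W(i)-S(i)} gives $d_{\la\mu}^\He(v)=d_{\la\mu}(v)$. Next, since $\alp(\la)=\alp(\mu)=(n_1,\dots,n_g)$, each $\la^{[k]},\mu^{[k]}$ is an $r_k$-partition of $n_k$, so the $v$-decomposition numbers $d_{\la^{[k]}\mu^{[k]}}(v)$ of $\Sc(\vL_{n_k})$ are defined, and Theorem~\ref{th-decom-bar-v} yields
\[d_{\la\mu}(v)=\prod_{k=1}^g d_{\la^{[k]}\mu^{[k]}}(v).\]
Now, applying Proposition~\ref{prop-W(i)-S(i)} to each cyclotomic $q$-Schur algebra $\Sc(\vL_{n_k})$ — which requires exactly the extra hypothesis $D^{\mu^{[k]}}\not=0$ — gives $d_{\la^{[k]}\mu^{[k]}}(v)=d_{\la^{[k]}\mu^{[k]}}^\He(v)$ for each $k$. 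Substituting these identities into the displayed product formula produces
\[d_{\la\mu}^\He(v)=d_{\la\mu}(v)=\prod_{k=1}^g d_{\la^{[k]}\mu^{[k]}}(v)=\prod_{k=1}^g d_{\la^{[k]}\mu^{[k]}}^\He(v),\]
which is the claim.

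**Where the real work lies.** There is essentially no new obstacle in this theorem itself; the content has already been moved into Theorem~\ref{th-decom-bar-v} and Proposition~\ref{prop-W(i)-S(i)}. The one point that needs a word of care is checking that the hypotheses are exactly what is needed to legitimize each invocation of Proposition~\ref{prop-W(i)-S(i)}: the statement of that proposition requires the target irreducible to be nonzero, so for the left-hand application we use $D^\mu\not=0$, and for the $k$-th right-hand application we use $D^{\mu^{[k]}}\not=0$ — both of which are assumed. (Implicitly this also guarantees all the $v$-decomposition numbers in sight are honestly defined, since $d_{\la^{[k]}\mu^{[k]}}^\He(v)$ only makes sense when $D^{\mu^{[k]}}\not=0$.) So the proof is a short three-line chain of equalities with no genuine hard step; the only thing to be vigilant about is the bookkeeping of indices $\la^{[k]},\mu^{[k]}$ and the matching of hypotheses to the cited results.
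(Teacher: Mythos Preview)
Your proposal is correct and matches the paper's approach exactly: the paper simply states that the theorem follows by ``Combining Theorem~\ref{th-decom-bar-v} with Proposition~\ref{prop-W(i)-S(i)}'' without writing out a separate proof, and your three-line chain of equalities is precisely that combination, with the hypotheses $D^\mu\ne 0$ and $D^{\mu^{[k]}}\ne 0$ used exactly where they are needed.
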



\begin{thebibliography}{DJM10}

\bibitem[A1]{Ari96}
S.~Ariki.
\newblock On the decomposition numbers of the {H}ecke algebra of {$G(m,1,n)$}.
\newblock {\em J. Math. Kyoto Univ.} {\bf 36} (1996), 789-808.

\bibitem[A2]{Ari01}
S.~Ariki.
\newblock On the classification of simple modules for cyclotomic {H}ecke
  algebras of type {$G(m,1,n)$} and {K}leshchev multipartitions.
\newblock {\em Osaka J. Math.} {\bf38} (2001), 827-837.

\bibitem[AM]{AM00}
S.~Ariki and A.~Mathas.
\newblock The number of simple modules of the {H}ecke algebras of type
  {$G(r,1,n)$}.
\newblock {\em Math. Z.} {\bf233} (2000), 601-623.

\bibitem[DJM]{DJM98}
R.~Dipper, G.~James, and A.~Mathas.
\newblock Cyclotomic {$q$}-{S}chur algebras.
\newblock {\em Math. Z.} {\bf 229} (1998), 385-416.

\bibitem[DR]{DR98}
J.~Du and H.~Rui.
\newblock Based algebras and standard bases for quasi-hereditary algebras.
\newblock {\em Trans. Amer. Math. Soc.} {\bf350} (1998), 3207-3235.

\bibitem[GL]{GL96}
J.~J. Graham and G.~I. Lehrer.
\newblock Cellular algebras.
\newblock {\em Invent. Math.} {\bf123} (1996), 1-34.

\bibitem[Jac]{Jac05}
N.~Jacon.
\newblock An algorithm for the computation of the decomposition matrices for
  {A}riki-{K}oike algebras.
\newblock {\em J. Algebra} {\bf292} (2005), 100-109.

\bibitem[JM]{JM00}
G.~James and A.~Mathas.
\newblock The {J}antzen sum formula for cyclotomic {$q$}-{S}chur algebras.
\newblock {\em Trans. Amer. Math. Soc.} {\bf352} (2000), 5381-5404.

\bibitem[LLT]{LLT96}
A.~Lascoux, B.~Leclerc, and J.-Y. Thibon.
\newblock Hecke algebras at roots of unity and crystal bases of quantum affine
  algebras.
\newblock {\em Comm. Math. Phys.} {\bf181} (1996), 205-263.

\bibitem[LT]{LT96}
B.~Leclerc and J.-Y. Thibon.
\newblock Canonical bases of {$q$}-deformed {F}ock spaces.
\newblock {\em Internat. Math. Res. Notices}, (1996) 447-456.

\bibitem[M1]{Mat98}
A.~Mathas.
\newblock Simple modules of {A}riki-{K}oike algebras.
\newblock In \lq\lq {\em Group representations: cohomology, group actions and topology }",
 {\em Proc. Sympos. Pure Math.} vol.{\bf63},
 Amer. Math. Soc., 1998, pp.383-396.
\bibitem[M2]{M-book}
A.~Mathas.
\newblock {\em Iwahori-{H}ecke algebras and {S}chur algebras of the symmetric
  group}, {\em University Lecture Series} Vol.{\bf 15}, 
\newblock Amer. Math. Soc. 1999.

\bibitem[M3]{Mat04}
A.~Mathas.
\newblock The representation theory of the {A}riki-{K}oike and cyclotomic
  {$q$}-{S}chur algebras.
\newblock In \lq\lq {\em Representation theory of algebraic groups and quantum
  groups}", {\em Adv. Stud. Pure Math.} Vol. {\bf40}, Math. Soc.
  Japan, Tokyo 2004, pp. 261-320.

\bibitem[SW]{SW}
T.~Shoji and K.~Wada.
\newblock Cyclotomic $q$-Schur algebras associated to the Ariki-Koike algebra, preprint.

\bibitem[U]{Ugl00}
D.~Uglov.
\newblock Canonical bases of higher-level {$q$}-deformed {F}ock spaces and
  {K}azhdan-{L}usztig polynomials.
\newblock In \lq\lq {\em Physical combinatorics (Kyoto, 1999)}",  {\em
  Progr. Math.} vol. {\bf191}, Birkh\"auser Boston, Boston, 2000, pp. 249-299

\bibitem[VV]{VV99}
M.~Varagnolo and E.~Vasserot.
\newblock On the decomposition matrices of the quantized {S}chur algebra.
\newblock {\em Duke Math. J.}, {\bf100}, (1999), 267--297.

\bibitem[Y]{Yvo05}
X.~Yvonne.
\newblock A conjecture for {$q$}-decomposition matrices of cyclotomic
  {$v$}-{S}chur algebras.
\newblock {\em J. Algebra}, {\bf304}, (2006) 419--456.

\end{thebibliography}
\end{document}